\documentclass{amsart}

\usepackage[utf8]{inputenc}

\usepackage[english]{babel}

\usepackage{amsmath,amssymb,amsthm,stackengine}

\usepackage{esint}
\usepackage{enumitem}

\usepackage{babelbib}

\usepackage{hyperref}

\usepackage[T1]{fontenc}

\usepackage{lmodern}

\theoremstyle{definition}

\newtheorem{lemma}{Lemma}
\newtheorem{thm}{Theorem}
\newtheorem{rem}{Remark}
\newtheorem{prop}{Proposition}
\newtheorem{cor}{Corollary}

\author{Glal Bacho}
\thanks{Glal Bacho (corresponding author): Lehrstuhl für Angewandte Analysis, RWTH Aachen University, Kreuzherrenstr. 2, 52062 Aachen, Germany. Email: \texttt{bacho@math1.rwth-aachen.de}. Phone: +49 241 80-94584.}

\author{Christof Melcher}
\thanks{Christof Melcher: Lehrstuhl für Angewandte Analysis, RWTH Aachen University, Kreuzherrenstr. 2, 52062 Aachen, Germany. Email: \texttt{melcher@rwth-aachen.de}. Phone: +49 241 80-94585.}

\title[Hopfions in frustrated magnets]{Existence and large-field asymptotics of Hopfions in a continuum model of frustrated magnets}

\newcommand{\bs}[1]{\boldsymbol{#1}}
\newcommand{\e}[1]{\bs{\hat{e}}_{#1}}
\renewcommand{\d}[1]{\mathrm{d}#1}

\newcommand{\R}{\mathbb{R}}
\newcommand{\C}{\mathbb{C}}
\newcommand{\Z}{\mathbb{Z}}
\newcommand{\N}{\mathbb{N}}
\renewcommand{\S}{\mathbb{S}}
\renewcommand{\Re}{\operatorname{Re}}
\renewcommand{\Im}{\operatorname{Im}}

\newcommand{\E}{\mathcal{E}}
\newcommand{\tE}{\widetilde{\E}}

\newcommand{\m}{\boldsymbol{m}}

\newcommand{\loc}{\mathrm{loc}}

\newcommand{\F}{\boldsymbol{F}}
\newcommand{\A}{\boldsymbol{A}}

\newcommand{\eps}{\varepsilon}

\newcommand{\ein}{\boldsymbol{\hat{e}}}

\newcommand{\qand}{\quad \text{and} \quad}

\begin{document}

\begin{abstract}
We study a continuum model of frustrated magnetism that supports particle-like field configurations with nontrivial Hopf index. The energy functional combines competing second and first order exchange interactions with a stabilizing external magnetic field and gives rise to a variational problem for maps from three-dimensional space into the two-sphere. We prove the existence of energy minimizers in infinitely many nontrivial homotopy classes. We further establish the existence of a unit-charge Hopfion in the regime of strong external fields. Finally, we analyze the corresponding large-field limit, identify the natural concentration scale, and prove strong convergence of suitably rescaled minimizers to a minimizer of a limiting variational problem.
\end{abstract}

\maketitle

%\tableofcontents

\section{Introduction}
We consider maps $\m:\mathbb{R}^3\to\S^2$ that approach a constant at infinity. Compactifying $\mathbb{R}^3\cup\{\infty\}\simeq\S^3$ then identifies such maps with maps $\m:\S^3\to\S^2$, whose homotopy classes are classified by $\pi_3(\S^2)\simeq\mathbb{Z}$. Thus, each map carries an integer-valued topological invariant $Q[\m]$, known as the Hopf index, which can be interpreted as the linking number of two generic fibers of $\m$. Maps with nonzero Hopf index arise in a range of physical settings, from topological hydrodynamics and electromagnetism to nonlinear field theories, where the corresponding knotted or linked structures may appear as stationary or dynamical configurations. Such configurations are commonly referred to as Hopfions. A prototypical continuum model supporting such Hopfions is the Skyrme-Faddeev model, obtained by modifying the $\mathcal{O}(3)$ sigma model \cite{faddeevquantization,faddeevtoroidal}
\begin{align*}
E_F[\m]
= \int_{\R^3} |\nabla \m|^2 \;+\; |\bs{F}|^2 \d x
\end{align*}
where $\bs{F}=\nabla\times \A$ is the curvature induced by the pull-back via $\m$, i.e.,
\begin{align*}
\F \;=\; \frac12\varepsilon_{ijk} \big(\m\cdot \partial_i \m\times \partial_j \m\big)\e{k},
\end{align*}
that is directly related to the Hopf index via the Whitehead integral representation
\begin{align*}
Q[\m] \;=\; \frac{1}{16\pi^2}\int_{\R^3} \F\cdot \A\, \d x,
\end{align*}
which exhibits $Q$ as a gauge-invariant helicity-type quantity \cite{whitehead}. A key feature of this coupling is the existence of topological lower bounds of Vakulenko--Kapitanskii type,
$E_F[\m] \gtrsim |Q[\m]|^{\frac34}$, which ensure the well-definedness of the Hopf index in the relevant energy class and play a decisive role in the existence of local minima within the corresponding topological sectors of configuration space. In particular, the curvature term counteracts the tendency of nontrivial configurations to collapse to arbitrarily small scales, thereby stabilizing a finite characteristic length scale. Lin and Yang proved the existence of minimizing configurations for infinitely many Hopf charges \cite{LinYang2004} and subsequently obtained existence in the unit-charge classes $Q=\pm1$ using their so-called substantial inequality \cite{LinYang2007}. On closed three-manifolds, Auckly and Kapitanski developed a corresponding variational existence in every homotopy class, based on the Pontrjagin classification of maps into $\S^2$ \cite{Auckly2005GaugeExistence}. \\

A different route to Hopfions emerges from continuum limits of classical Heisenberg spin systems with competing (frustrated) interactions in a stabilizing uniaxial Zeeman field. 
An effective micromagnetic model is given by
\begin{align*}
\E_H[\m]
\;=\; \frac12 \int_{\R^3}  |\Delta \m|^2 \;-\; |\nabla \m|^2 \;+\; H | \m - \ein_3|^2 \d x,
\end{align*}
with $H>0$ the (rescaled) field strength \cite{lin2016ginzburg}. This model couples a stabilizing Laplacian term to a destabilizing gradient energy and a Zeeman term. We work in the regime $H>\tfrac{1}{4}$, which ensures a coercive localization of the energy around the uniform ground state and supports a topological lower bound, linking $\E_H$ to the Hopf index. A closely related situation arises in the two-dimensional Skyrmion case, where the corresponding topological solitons have been shown to be supported \cite{lin2016ginzburg}. Their mathematically rigorous existence, however, has only recently been established \cite{Li2026Skyrmions}. In three dimensions, compelling numerical evidence for the existence of minimizing Hopfions in frustrated magnets was provided in the influential work of \cite{Sutcliffe2017Frustratedknots}, which demonstrated the persistence of knotted topological structures in this setting. More broadly, Hopfions have been shown to arise in material-realistic models of magnetic solids with competing exchange interactions, providing a connection between the mathematical theory of topological solitons and the search for realizations in magnetic materials \cite{RybakovKiselevHopfions}.
Corresponding dynamical equations have also been studied analytically. In particular, the frustrated-magnets model has been incorporated into Landau--Lifshitz--Gilbert dynamics coupled to Vlasov--Maxwell equations through emergent electromagnetic fields, as well as into stochastic Landau--Lifshitz--Gilbert dynamics driven by fluctuating spin currents \cite{DoresicMelcher,GoldysJiaoMelcher}, establishing the persistence of topological patterns under transport noise. Nevertheless, the variational existence of energy-minimizing configurations in nontrivial homotopy classes has remained open. This provides the natural starting point for the variational existence problem considered here.

Using integration by parts, the energy density may be recentered at the uniform background state $\e 3$
\begin{align}
    \label{eq:energyDensity}
    e(\m)
    =
    \frac12\left(
    \Big|\Delta \m+\tfrac{1}{2}(\m-\e{3})\Big|^2
    +
    \Big(H-\tfrac{1}{4}\Big)|\m-\e{3}|^2
    \right)
\end{align}

For $H>\tfrac{1}{4}$, the quadratic form in $\Delta\m$ and $\m-\e{3}$ is positive definite. Hence
\begin{align}
    \label{eq:densityCoercivity}
    e(\m)
    \gtrsim_H
    |\Delta\m|^2
    +
    |\m-\e{3}|^2.
\end{align}
This reveals the natural configuration space
\begin{align*}
H^2(\R^3;\S^2)\;=\;\Big\{\m:\R^3\to\S^2 \;\Big|\; \m-\e{3}\in H^2(\R^3;\R^3)\Big\},
\end{align*}
on which $Q$ is well defined. In this situation, a Vakulenko-Kapitanskii-type estimate 
\begin{align} \label{eq:Harland}
\E_H[\m] \;\ge\; C(H)|Q[\m]|^{\frac34}
\end{align}
due to Harland \cite{harland} with an explicit $H$-dependent constant 
\begin{align*}
C(H) = \frac{2^{\frac{9}{2}} \pi^2}{3^{\frac58}} \frac{\sqrt{1+12H}-2}{\sqrt{\sqrt{1+12H}-1}} 
\end{align*}
holds true. The variational problem is therefore
\begin{align}\label{Ek}
\tag{$E_k$}
I_k(H) \;=\; \inf\big\{\E_H[\m] : \m\in H^2(\R^3;\S^2),\; Q[\m]=k\big\}.
\end{align}
A minimizer $\m_0$ for \eqref{Ek}, when it exists, is a static Hopfion of charge $k \in \Z\setminus\{0\}$. Spatial inversion, $\m^-(x)=\m(-x)$, preserves the energy and reverses the Hopf charge. Consequently,
\begin{align}\label{eq:reflection}
    I_k(H)=I_{-k}(H),
\end{align}
so it suffices to consider positive charges $k>0$, and in particular $k=1$ for unit charge.

At the analytical level, the lack of compactness is addressed by the concentration-compactness principle of Lions \cite{lions}. In the subcritical $H^2$-setting considered here, the relevant obstruction is dichotomy, namely a nontrivial splitting into lower energy levels. Once such splittings are excluded, minimizing sequences are compact up to translations and the corresponding energy level is attained. Following the strategy of Lin and Yang, the sublinear growth of the minimal energy with respect to the Hopf charge can then be exploited to exclude nontrivial splittings for infinitely many such charges \cite{LinYang2004}. 

\begin{thm}\label{thm:largeHunit} $ $ \\ \vspace*{-0.4cm}
\begin{enumerate}[label=(\roman*), ref=(\roman*)]
    \item\label{item:infinitelymany} For $H>\frac14$, $I_k(H)$ is attained infinitely many $k \in \Z$.
    \item\label{item:largeHexistence} There exists $H_0>\tfrac{1}{4}$ such that $I_1(H)$ is attained for every $H\geq H_0$.
\end{enumerate}
\end{thm}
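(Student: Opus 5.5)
The plan follows the concentration-compactness approach of Lions combined with the Lin--Yang strategy.

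\textbf{Compactness up to dichotomy.} Fix $k$ and take a minimizing sequence $\m_n$ for \eqref{Ek}. By \eqref{eq:densityCoercivity}, $\m_n-\e3$ is bounded in $H^2$, so the densities $\rho_n=e(\m_n)$ have bounded mass. Lions' lemma then gives three alternatives. Vanishing is excluded as follows. Vanishing forces $\m_n-\e3\to0$ in $L^p$ for $2<p<6$, and by interpolation also $\nabla\m_n\to0$ in $L^4$. Since $|\F|\lesssim|\nabla\m|^2$, this gives $\F_n\to0$ in $L^2$. Using $\A=\nabla\times\Delta^{-1}$-type estimates, $\|\A\|_{L^6}\lesssim\|\F\|_{L^2}$, and $\F\in L^{6/5}$ follows from $\nabla\m\in L^{12/5}$. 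Hence $Q[\m_n]\to0$, which contradicts $Q=k\neq0$.

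If dichotomy occurs, cut off with $\S^2$-valued localizations. Away from the support the map is close to $\e3$ in $H^2$ on annuli, so one can project $\e3+\chi(\m-\e3)$ onto $\S^2$. This produces $\m_n^{(1)},\m_n^{(2)}$ with
\[
\E_H[\m_n]\ge \E_H[\m_n^{(1)}]+\E_H[\m_n^{(2)}]-o(1),\qquad Q[\m_n]=Q[\m_n^{(1)}]+Q[\m_n^{(2)}]+o(1).
\]
Integrality gives $k=k_1+k_2$ eventually. Neither piece can have vanishing energy while carrying nonzero charge, by \eqref{eq:Harland}. If $k_2=0$ but the energy of that piece is positive, we get a strict loss $I_k>I_k$, a contradiction. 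So in the remaining case $I_k\ge I_{k_1}+I_{k-k_1}$ with $k_1\ne0,k$. In the compact case, weak $H^2$ limits preserve the constraint, since $Q$ is continuous under strong local convergence plus tightness. Lower semicontinuity of $\E_H$ (convexity in $\Delta\m$, and compact embedding for the Zeeman and lower-order parts via tightness) then yields a minimizer. The main technical check here is the continuity of $Q$ in this topology.

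\textbf{Part \ref{item:infinitelymany}.} Existence fails at $k$ only if $I_k\ge I_{k_1}+I_{k_2}$ for a nontrivial splitting. Conversely, the gluing construction (widely separated, compactly supported near-minimizers, obtained by density) gives subadditivity $I_{k_1+k_2}\le I_{k_1}+I_{k_2}$. Together these mean existence fails at $k$ exactly when $I_k=I_{k_1}+I_{k_2}$ is a strict splitting. We also need an upper bound $I_k\le Ck^{3/4}$, obtained by an explicit trial map of charge $k$ built from a rescaled Hopf-type construction, for example by fiberwise composition or a toroidal ansatz at scale $k^{1/4}$. Lin--Yang's argument then applies: if the infimum were attained for only finitely many charges, every large $k$ would decompose $I_k$ as a sum of $I_{j}$ over finitely many attained charges $j$. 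Combined with \eqref{eq:Harland}, this would give $I_k\ge c k$, contradicting $I_k\lesssim k^{3/4}$. Using \eqref{eq:reflection}, attainment holds for infinitely many $k$.

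\textbf{Part \ref{item:largeHexistence}.} For $k=1$, the only obstruction is $I_1=I_{k_1}+I_{1-k_1}$ with $k_1\notin\{0,1\}$, so one charge has modulus at least $2$. We therefore need the strict inequality
\[
I_1(H)<\inf_{j\ne 0,1}\big(I_j(H)+I_{1-j}(H)\big).
\]
Plan: rescale $x=\lambda y$ with $\lambda\sim H^{-1/4}$, so that $|\Delta\m|^2$ balances the Zeeman term. Then $\E_H=H^{1/4}(\tfrac12\int|\Delta\m|^2+|\m-\e3|^2-H^{-1/2}\int|\nabla\m|^2)$. As $H\to\infty$ this approaches $H^{1/4}$ times a limiting frustration-free functional $\E_\infty$, uniformly on bounded sets, since $\int|\nabla \m|^2\le\|\Delta\m\|\|\m-\e3\|$. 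The same dichotomy arguments apply to $\E_\infty$, and \eqref{eq:Harland} passes to the limit.

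The key step, and the main obstacle, is to show $I_1^\infty<I_j^\infty+I_{1-j}^\infty$ for the limiting problem. I would obtain this from a Vakulenko--Kapitanskii bound $I_j^\infty\ge c|j|^{3/4}$ with explicit $c$, together with an explicit unit-charge trial map for which $I_1^\infty< 2c\cdot 2^{3/4}\cdot$(smaller term). Only the lower bound for $|j|\ge2$ plus a positive term is needed, so it suffices that $c\,2^{3/4}+c$ exceeds the trial energy. If that numerical comparison fails, the fallback is a Lin--Yang-type substantial inequality for $\E_\infty$. Strictness for $\E_\infty$ then transfers to $\E_H$ for $H\ge H_0$ by the uniform convergence of the rescaled functionals and the lower bounds.
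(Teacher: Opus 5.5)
\textbf{Part (i) and the compactness scheme.} These follow the paper's argument: Lions' trichotomy, dichotomy forcing a splitting $I_k\ge I_\ell+I_{k-\ell}$ with $\ell\ne0,k$, and the Lin--Yang counting argument against $I_k\lesssim|k|^{3/4}$. Excluding vanishing via $Q[\m_n]\to0$, rather than via uniform convergence to $\e{3}$, is a fine variant. For the growth bound you need a charge-$\ell^2$ map equal to $\e{3}$ outside $B_{\sqrt\ell}$ with bounded second derivatives, as in Lin--Yang. One example is a Hopf map of size $\sqrt\ell$ post-composed with a degree-$\ell$ self-map of $\S^2$. A plain $(n,n)$ toroidal ansatz would not work: it has $\|\m-\e{3}\|_{L^2}^{1/2}\|\Delta\m\|_{L^2}^{3/2}\gtrsim n^3=k^{3/2}$ at every scale.

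\textbf{The gap in Part (ii).} Your lower bound for a forbidden splitting only uses that one charge has modulus $\ge2$ and the other $\ge1$. You therefore need a unit-charge trial map with $\frac{1}{2^4 3^{3/8}\pi^2}\|\m-\e{3}\|_{L^2}^{1/2}\|\Delta\m\|_{L^2}^{3/2}<1+2^{3/4}\approx2.68$. You exhibit no such map, and it is far from clear that one exists. The natural explicit competitor, the Hopf spinor ansatz with profile $f=2\arctan(e^{-r}/r)$, has normalized value $\approx3.64$. The fallback of a Lin--Yang substantial inequality for $\E^\infty$ is only a pointer, not an argument, so the key strict inequality remains unproved.

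The missing observation is that the only splittings of charge $1$ containing a piece of modulus one are $\ell\in\{-1,2\}$, and these need no numerics. By spatial inversion $I_{-1}=I_1$, hence $I_{-1}+I_2=I_1+I_2>I_1$, since $I_2\ge C(H)2^{3/4}>0$. For every other $\ell\ne0,1$, both $|\ell|$ and $|1-\ell|$ are $\ge2$, with one of them $\ge3$. Harland's bound then gives $I_\ell(H)+I_{1-\ell}(H)\ge C(H)(2^{3/4}+3^{3/4})\approx3.96\,C(H)$. Now the explicit competitor suffices: combined with the exact scaling optimization of Lemma~\ref{lemma:scaling} and its large-$H$ limit, it yields $\limsup_{H\to\infty}I_1(H)/C(H)<3.637<2^{3/4}+3^{3/4}$. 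This comparison is made directly at finite $H$ against $C(H)$, so your detour through $\E^\infty$ and back to $\E_H$ is not needed.
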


In the large field regime $H \gg 1$ the scaling structure of the energy can be exploited. The minimal energy in a fixed class grows like $H^{\frac14}$ and is governed at leading order by the scale invariant quantity
\begin{align} \label{eq:scaleinvariantenergy}
\|\m-\e{3}\|_{L^2}^{\frac12}
\|\Delta\m\|_{L^2}^{\frac32}.
\end{align}
At leading order, the strict subadditivity condition $I_1(H)<I_\ell(H)+I_{1-\ell}(H)$ for the unit charge can be verified on a single, well-chosen competitor.

The competitor is built from the standard Hopf map, the prototypical unit charge configuration obtained by composing the inverse stereographic projection $\R^3\cup\{\infty\}\simeq\S^3$ with the Hopf fibration $\S^3\to\S^2$. This map is not admissible, since it approaches the background $\e{3}$ only like $|x|^{-1}$, so that $\m-\e{3}\notin L^2(\R^3)$ and the energy is infinite. To restore finite energy without changing the topology, we modify the radial profile to ensure exponential decay toward $\e{3}$ at infinity.

\medskip

A standard scaling analysis of minimizers guarantees a characteristic asymptotic core scale of $H^{-\frac14}$. Using the scale invariant asymptotic energy \eqref{eq:scaleinvariantenergy} we can identify a limiting variational integral
\begin{align*}
    \E^\infty(\m)
    =
    \frac12
    \int_{\R^3}
        |\Delta\m|^2
        +
        |\m-\e{3}|^2
    \; \d{x}
\end{align*}
and recycle the concentration-compactness route to ensure compactness modulo translation in this concentrating scale.

\begin{thm}\label{thm:profile}
For $H\ge H_0$ let $\m_H\in H^2(\R^3;\S^2)$ be a minimizer of $I_1(H)$. Then the following holds.
\begin{enumerate}[label=(\roman*), ref=(\roman*)]
\item\label{item:profileTrivial} As $H\to\infty$,
\begin{align*}
   \| \m_H- \e{3} \|_{H^1(\R^3)} \to 0
    \qquad\text{while}\qquad
    \|\Delta\m_H\|_{L^2(\R^3)}\to\infty .
\end{align*}
In particular $(\m_H-\e{3})_{H\ge H_0}$ is unbounded in $H^2(\R^3;\R^3)$.
\item\label{item:profileLimit} For every sequence $H_j\to\infty$ there exist translations $y_j\in\R^3$, a subsequence (not relabelled) and a map $\bs{u}\in H^2(\R^3;\S^2)$ such that
\begin{align*}
    \bs{u}_j
    :=
    \m_{H_j}\bigl(H_j^{-\frac14}(\,\cdot\,-y_j)\bigr)
    \longrightarrow
    \bs{u}
    \quad\text{strongly in } H^2(\R^3;\R^3)
\end{align*}
(and thus uniformly) with $Q[\bs u]=1$ and
\[
 \E^\infty(\bs{u}) \;=\; \inf\big\{\E^\infty[\m] : \m\in H^2(\R^3;\S^2),\; Q[\m]=1\big\}.
\]
\end{enumerate}
\end{thm}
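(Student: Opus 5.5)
The starting point is the scaling structure of $\E_H$. For $\m\in H^2(\R^3;\S^2)$ and $\lambda>0$ set $\m_\lambda(x)=\m(x/\lambda)$. Then
\[
\E_H[\m_\lambda]=\tfrac12\bigl(\lambda^{-1}\|\Delta\m\|^2-\lambda\|\nabla\m\|^2+H\lambda^3\|\m-\e3\|^2\bigr),
\]
with all norms in $L^2(\R^3)$, while $Q[\m_\lambda]=Q[\m]$. Write $\bs u(y)=\m(H^{-\frac14}y)$, which is the rescaling with $\lambda=H^{\frac14}$ applied to $\bs u$. This gives the identity
\[
\E_H[\m]=H^{\frac14}\Bigl(\E^\infty[\bs u]-\tfrac12 H^{-\frac12}\|\nabla\bs u\|^2\Bigr).
\]
Let $J=\inf\{\E^\infty[\m]:\m\in H^2(\R^3;\S^2),\ Q[\m]=1\}$. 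For an upper bound, take any admissible competitor $\bs v$ with $\E^\infty[\bs v]\le J+\delta$; the modified Hopf map from the paper shows the class is nonempty, and $J>0$ by \eqref{eq:Harland}-type bounds applied at the limit. The identity then gives $I_1(H)\le H^{\frac14}(J+\delta)$. For a lower bound, interpolation gives $\|\nabla\bs u\|^2\le\|\bs u-\e3\|\,\|\Delta\bs u\|\le\E^\infty[\bs u]$. Hence
\[
H^{-\frac14}I_1(H)=\E^\infty[\bs u_H]-O(H^{-\frac12})\E^\infty[\bs u_H]\ge (1-CH^{-\frac12})J .
\]
Together these show $H^{-\frac14}I_1(H)\to J$ and that $\bs u_H:=\m_H(H^{-\frac14}\cdot)$ is a minimizing sequence for $J$. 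In particular $\E^\infty[\bs u_H]$ is bounded.

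Part \ref{item:profileTrivial} now follows by undoing the scaling. We have $\|\m_H-\e3\|^2=H^{-\frac34}\|\bs u_H-\e3\|^2\lesssim H^{-\frac34}$, $\|\nabla\m_H\|^2=H^{-\frac14}\|\nabla\bs u_H\|^2\to0$, and $\|\Delta\m_H\|^2=H^{\frac14}\|\Delta\bs u_H\|^2$. For the last to diverge we need $\|\Delta\bs u_H\|$ bounded below. This holds because $\|\bs u_H-\e3\|^{\frac12}\|\Delta\bs u_H\|^{\frac32}\gtrsim 1$ on unit-charge maps (the Vakulenko--Kapitanskii bound in scale-invariant form, obtained by optimizing \eqref{eq:Harland} over scalings), and $\|\bs u_H-\e3\|$ is bounded above.

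For part \ref{item:profileLimit}, I apply concentration-compactness to $\rho_j=|\Delta\bs u_j|^2+|\bs u_j-\e3|^2$ for the sequence $\bs u_j=\bs u_{H_j}$. Vanishing is excluded because it would force $\bs u_j\to\e3$ in $L^\infty$ (via local $H^2\subset C^{0,\frac12}$ bounds) and hence $Q[\bs u_j]\to0$, contradicting $Q=1$.

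The main obstacle is excluding dichotomy. Suppose dichotomy occurs. Cut off to obtain pieces $\bs u_j^{1},\bs u_j^{2}$ with charges $\ell$ and $1-\ell$; charge additivity follows from the locality of the Whitehead integral after gauge-fixing $\A$ on separated supports. The energies then satisfy $\E^\infty\ge J_\ell+J_{1-\ell}-o(1)$, where $J_k$ denotes the limiting infimum in class $k$. One therefore needs strict subadditivity $J<J_\ell+J_{1-\ell}$ for all $\ell\neq 0,1$, including $\ell=0$ with nontrivial energy split.

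I would derive this from the result behind Theorem~\ref{thm:largeHunit}\ref{item:largeHexistence}, since that existence proof verifies subadditivity at leading order in $H$. Concretely, the limiting problem satisfies $J_k\ge c|k|^{\frac34}$, and the competitor gives an explicit $J$ below $\min_\ell(J_\ell+J_{1-\ell})$, because $J_\ell+J_{1-\ell}\ge c(|\ell|^{\frac34}+|1-\ell|^{\frac34})\ge 2c$ for $\ell\ne0,1$. The case $\ell=0$ is handled by the positivity of any energy portion that does not vanish.

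Once dichotomy is excluded, compactness holds, so after translations $\bs u_j\to\bs u$ in $L^2$. The $H^2$ weak limit satisfies $|\bs u|=1$ a.e., and $Q$ is continuous in this topology, so $Q[\bs u]=1$. Lower semicontinuity gives $\E^\infty[\bs u]\le J$, so $\bs u$ is a minimizer. Equality of norms in the limit upgrades weak to strong $H^2$ convergence, and Sobolev embedding then gives uniform convergence.
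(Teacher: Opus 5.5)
\textbf{Overall route.} Your proposal follows the paper's route. You rescale by $H^{-\frac14}$ and show that the rescaled minimizers form a minimizing sequence for the limiting problem. You then run concentration-compactness and upgrade weak to strong $H^2$ convergence. Your interpolation bound $\|\nabla\bs u\|_{L^2}^2\le\E^\infty[\bs u]$ is a clean substitute for the exact scaling formula behind Lemma~\ref{lemma:energyasymptotic}. The vanishing case, the trivial splittings $\ell\in\{0,1\}$ via $0<\lambda<1$, the passage to the limit and the upgrade to strong $H^2$ convergence are fine.

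\textbf{The gap: excluding dichotomy.} You argue that $J_\ell+J_{1-\ell}\ge c\bigl(|\ell|^{\frac34}+|1-\ell|^{\frac34}\bigr)\ge 2c$ for $\ell\neq 0,1$, and that the competitor places $J$ below this. It does not. With the sharp constant $c=c_\infty$ of \eqref{eq:limitTopologicalBound}, the competitor only yields $J/c_\infty<3.636802$ by \eqref{eq:chargeOneCompetitorValue}, whereas your argument needs $J<2c_\infty$.

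Using the full strength of $J_k\ge c_\infty|k|^{\frac34}$ does not help either. The minimum of $|\ell|^{\frac34}+|1-\ell|^{\frac34}$ over $\ell\neq 0,1$ is $1+2^{\frac34}\approx 2.68$, attained at $\ell\in\{-1,2\}$, and this is still below $3.64$. So the topological lower bound plus the explicit competitor cannot exclude the splittings $1=(-1)+2$ and $1=2+(-1)$.

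\textbf{The missing idea.} Use the inversion symmetry $\m\mapsto\m(-\,\cdot\,)$, which preserves $\E^\infty$ and reverses the Hopf index, so $J_{-1}=J_1$.
\begin{itemize}
\item For $\ell\in\{-1,2\}$ this gives $J_\ell+J_{1-\ell}=J_1+J_2>J_1$, simply because $J_2\ge 2^{\frac34}c_\infty>0$. No numerics are needed.
\item Only for $\ell\notin\{-1,0,1,2\}$ is the topological bound used. There $|\ell|^{\frac34}+|1-\ell|^{\frac34}\ge 2^{\frac34}+3^{\frac34}\approx 3.96$, which the competitor value $3.637$ does beat.
\end{itemize}
This is exactly the case split in the proofs of Theorem~\ref{thm:largeHunit}\ref{item:largeHexistence} and Proposition~\ref{prop:limitcompactness}. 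Importing that argument verbatim, as your first sentence suggests, would work, but the concrete version you wrote down does not close.
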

\section{Vakulenko-Kapitanskii-type estimates}
\subsection{Lower energy bounds} \label{sec:lowerbounds}
For $\lambda > 0$, define the scaled map $\m^\lambda(x) = \m(\lambda x)$ and note that $Q[\m^{\lambda}] = Q[\m]$ due to homotopy invariance of the Hopf index. In contrast, the energy is not invariant under this transformation
\begin{align}\label{eq:scaling}
\E_H[\m^\lambda]
= \frac{\lambda}{2}\|\Delta \m\|_{L^2}^2
- \frac{1}{2\lambda}\|\nabla \m\|_{L^2}^2
+ \frac{H}{2\lambda^3}\|\m-\e{3}\|_{L^2}^2.
\end{align}
The presence of the $|\Delta \m|^2$ term prevents the collapse that typically appears in purely quadratic models, analogously to the stabilizing mechanism in the Skyrme-Faddeev model.
\begin{lemma}\label{lemma:scaling}
Let $\m\in H^2(\R^3;\S^2)$. Then there exists $\theta\in [0,1]$ (depending on $\m$) such that
\begin{align}
    \label{eq:scalingFormula}
  \inf_{\lambda>0}\E_H[\m^{\lambda}]
=
\frac{\sqrt{2}}{3}
\frac{\sqrt{\theta^2+12H}-2\theta}
{\sqrt{\sqrt{\theta^2+12H}-\theta}}
\,
\|\m-\e{3}\|_{L^2}^{\frac12}
\|\Delta\m\|_{L^2}^{\frac32}.
\end{align}
In particular
\begin{align*}
      \frac{\sqrt{2}}{3}\frac{\sqrt{1+12H}-2}{\sqrt{\sqrt{1+12H}-1}}\,\|\m-\e{3}\|_{L^2}^{\frac12}\|\Delta\m\|_{L^2}^{\frac32}\le \inf_{\lambda>0}\E_H[\m^{\lambda}]\le \frac{\sqrt{2}}{3}(12H)^{\frac14}\,\|\m-\e{3}\|_{L^2}^{\frac12}\|\Delta\m\|_{L^2}^{\frac32}.
\end{align*}
\end{lemma}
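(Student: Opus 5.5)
The plan is to reduce the infimum over dilations to a one-variable calculus problem, with $\theta$ encoding the gradient term through an interpolation inequality. Write $a=\|\Delta\m\|_{L^2}^2$, $b=\|\nabla\m\|_{L^2}^2$, $c=\|\m-\e{3}\|_{L^2}^2$, so that by \eqref{eq:scaling}
\[
f(\lambda)=\E_H[\m^\lambda]=\tfrac{\lambda}{2}a-\tfrac{1}{2\lambda}b+\tfrac{H}{2\lambda^3}c .
\]
If $\m\equiv\e{3}$, both sides vanish and any $\theta$ works, so assume $\m\not\equiv\e 3$. Then $a,c>0$, since $\Delta\m=0$ with $\m-\e3\in H^2$ forces $\m=\e3$.

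First I fix the meaning of $\theta$. Because $|\nabla\m|^2=|\nabla(\m-\e3)|^2$, integration by parts and Cauchy--Schwarz give
\[
b=-\int(\m-\e3)\cdot\Delta\m\,\d x\le\sqrt{ac}.
\]
We also have $b\ge0$. So I define $\theta\in[0,1]$ by $b=\theta\sqrt{ac}$.

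Next I compute the infimum explicitly. Since $f\to+\infty$ as $\lambda\to0$ (the $H c\lambda^{-3}$ term dominates) and as $\lambda\to\infty$, the infimum is attained at a critical point. The equation $f'(\lambda)=0$ reads
\[
a\lambda^4+b\lambda^2-3Hc=0 .
\]
This is a quadratic in $\lambda^2$ with the unique positive root
\[
\lambda_*^2=\frac{-b+\sqrt{b^2+12Hac}}{2a}=\sqrt{c/a}\,\frac{\sqrt{\theta^2+12H}-\theta}{2}.
\]
Set $s=\sqrt{\theta^2+12H}$ and $\mu=(s-\theta)/2$, so $\lambda_*=(c/a)^{1/4}\sqrt{\mu}$. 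Substituting, every term of $f(\lambda_*)$ scales like $a^{3/4}c^{1/4}$, and
\[
f(\lambda_*)=\tfrac12 a^{3/4}c^{1/4}\Big(\sqrt\mu-\frac{\theta}{\sqrt\mu}+\frac{H}{\mu^{3/2}}\Big).
\]
To simplify, I use the critical-point identity $\mu^2+\theta\mu=3H$ to replace $H$. This gives
\[
\sqrt\mu-\frac{\theta}{\sqrt\mu}+\frac{\mu+\theta}{3\sqrt\mu}=\frac{4\mu-2\theta}{3\sqrt\mu}=\frac{2(s-2\theta)}{3\sqrt{\mu}}.
\]
With $\sqrt\mu=\sqrt{s-\theta}/\sqrt2$, the bracket becomes $\tfrac{2\sqrt2}{3}(s-2\theta)/\sqrt{s-\theta}$. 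Multiplying by $\tfrac12$ and recalling $a^{3/4}c^{1/4}=\|\m-\e3\|_{L^2}^{1/2}\|\Delta\m\|_{L^2}^{3/2}$ yields \eqref{eq:scalingFormula}.

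For the two-sided bound I show that
\[
g(\theta)=\frac{\sqrt{\theta^2+12H}-2\theta}{\sqrt{\sqrt{\theta^2+12H}-\theta}}
\]
is nonincreasing on $[0,1]$. Then $g(1)\le g(\theta)\le g(0)=(12H)^{1/4}$, which is exactly the claimed pair of inequalities. The cleanest route is to avoid differentiating $g$ directly. Let $F(\theta)=\min_\lambda\big(\tfrac\lambda2 a-\tfrac{\theta\sqrt{ac}}{2\lambda}+\tfrac{H c}{2\lambda^3}\big)$. This is an infimum of functions each strictly decreasing in $\theta$, so $F$ is nonincreasing in $\theta$. Since $F(\theta)=\tfrac{\sqrt2}{3}g(\theta)a^{3/4}c^{1/4}$ by the computation above, $g$ is nonincreasing.

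The only delicate points are the existence and uniqueness of the positive critical point, and positivity of the value. For the latter, $g(1)>0$ requires $\sqrt{1+12H}>2$, i.e. $H>\tfrac14$. So the lower bound is informative exactly in the regime $H>\tfrac14$ assumed throughout, while the formula itself holds for any $H>0$. I expect the algebraic simplification of $f(\lambda_*)$ to be the main obstacle, and it is handled by eliminating $H$ through the critical-point relation.
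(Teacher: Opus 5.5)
Your proposal is correct and follows the paper's proof: the same normalization $\theta=\|\nabla\m\|_{L^2}^2/(\|\m-\e{3}\|_{L^2}\|\Delta\m\|_{L^2})$, the same explicit positive critical point $y=\sqrt{(\sqrt{\theta^2+12H}-\theta)/2}$, and the same monotonicity in $\theta$ of the infimum over $\lambda$. You also fill in details the paper leaves implicit. These are the Cauchy--Schwarz bound giving $\theta\le 1$, the elimination of $H$ via the critical-point identity $\mu^2+\theta\mu=3H$, and the observation that an infimum of functions decreasing in $\theta$ is nonincreasing.
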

\begin{proof}
Define for $\lambda>0$, $\m^{\lambda}(x)=\m(\lambda x)$ and as in \eqref{eq:scaling}
\begin{align*}
\E_H[\m^{\lambda}] &=  \frac{\lambda \| \Delta \m \|_{L^2}^2}{2} - \frac{ \|\nabla \m\|_{L^2}^2}{2\lambda} + \frac{H\|\m-\e{3}\|_{L^2}^2}{2\lambda^3}\\
&=\frac12
\|\m-\e{3}\|_{L^2}^{\frac12}
\|\Delta\m\|_{L^2}^{\frac32}
\left(
y-\frac{\theta}{y}+\frac{H}{y^3}
\right),
\end{align*}
where (assuming $\m \neq \e 3$)
\begin{align*}
y
=
\lambda
\frac{\|\Delta\m\|_{L^2}^{\frac12}}
{\|\m-\e{3}\|_{L^2}^{\frac12}},
\qquad
\theta
=
\frac{\|\nabla\m\|_{L^2}^{2}}
{\|\m-\e{3}\|_{L^2}\|\Delta\m\|_{L^2}} \in [0,1].
\end{align*}
The minimum in $y>0$ is attained at
\begin{align*}
y =\sqrt \frac{\sqrt{\theta^2+12H}-\theta}{2}.
\end{align*}
Since $\inf_{\lambda>0} \E_H[\m^\lambda]$ is decreasing in $\theta$, the desired estimates follow.
\end{proof}
\begin{lemma}[Scaling-invariant topological lower bound]\label{lemma:scaling_invariant_topological_bound}
Let $\m\in H^2(\R^3;\S^2)$. Then
\begin{align*}
    \|\m-\e{3}\|_{L^2}^{\frac12}
    \|\Delta\m\|_{L^2}^{\frac32}
    \ge
    2^4 3^{\frac38}\pi^2 |Q[\m]|^{\frac34}.
\end{align*}
\end{lemma}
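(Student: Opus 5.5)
The plan is to derive the scale-invariant bound from Harland's estimate \eqref{eq:Harland} and the scaling analysis of Lemma~\ref{lemma:scaling}, and then let $H\to\infty$. The point is that the left-hand side does not depend on $H$ at all. The $H$-dependence of Harland's constant and that of the upper bound in Lemma~\ref{lemma:scaling} should cancel to leading order as $H\to\infty$.

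First, I would fix $\m\in H^2(\R^3;\S^2)$. If $\m=\e{3}$, then $Q[\m]=0$ and there is nothing to prove, so assume $\m\neq\e{3}$. Fix any $H>\tfrac14$. Since $Q[\m^\lambda]=Q[\m]$, applying \eqref{eq:Harland} to every dilate $\m^\lambda$ and taking the infimum gives
\begin{align*}
C(H)\,|Q[\m]|^{\frac34}\le \inf_{\lambda>0}\E_H[\m^\lambda]\le \frac{\sqrt2}{3}(12H)^{\frac14}\,\|\m-\e{3}\|_{L^2}^{\frac12}\|\Delta\m\|_{L^2}^{\frac32},
\end{align*}
where the second inequality is the upper bound in Lemma~\ref{lemma:scaling}. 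Dividing through yields, for every $H>\tfrac14$,
\begin{align*}
\|\m-\e{3}\|_{L^2}^{\frac12}\|\Delta\m\|_{L^2}^{\frac32}\ge \frac{3\,C(H)}{\sqrt2\,(12H)^{\frac14}}\,|Q[\m]|^{\frac34}.
\end{align*}

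Second, I would let $H\to\infty$; the left-hand side is independent of $H$. Write $s=\sqrt{1+12H}$. Then
\begin{align*}
\frac{s-2}{\sqrt{s-1}}\sim s^{\frac12}\sim(12H)^{\frac14},
\end{align*}
so that
\begin{align*}
\frac{C(H)}{(12H)^{\frac14}}\to\frac{2^{\frac92}\pi^2}{3^{\frac58}}.
\end{align*}
Hence the constant tends to
\begin{align*}
\frac{3}{\sqrt2}\cdot\frac{2^{\frac92}\pi^2}{3^{\frac58}}=2^4\,3^{\frac38}\pi^2,
\end{align*}
which is exactly the claimed constant.

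The only real care point is to make sure the upper bound of Lemma~\ref{lemma:scaling} is used, rather than the exact $\theta$-formula, so that the bound is uniform in $\m$. Beyond that, the argument is an elementary asymptotic computation of the constant, and I do not expect a genuine obstacle.
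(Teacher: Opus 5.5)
Your argument is correct, but it runs in the opposite direction from the paper.

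The paper proves the lemma from scratch. Whitehead's formula, Hölder, the sharp Sobolev inequality for the Coulomb-gauge potential $\A$, the interpolation $L^1\cap L^2\subset L^{6/5}$, and Ward's pointwise bounds $|\F|\le\frac12|\nabla\m|^2$, $|\F|\le\frac12|\Delta\m|$ give the intermediate estimate \eqref{eq:intermediateVK}. Integration by parts, $\|\nabla\m\|_{L^2}^2\le\|\m-\e{3}\|_{L^2}\|\Delta\m\|_{L^2}$, then finishes the proof. Harland's bound \eqref{eq:Harland} is afterwards \emph{derived} as a corollary, by combining the lemma with the $\theta=1$ lower bound of Lemma~\ref{lemma:scaling}.

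You instead take \eqref{eq:Harland} as a black box, combine it with the $\theta=0$ upper bound of Lemma~\ref{lemma:scaling}, and let $H\to\infty$. Your constant computation $\frac{3}{\sqrt2}\cdot\frac{2^{9/2}\pi^2}{3^{5/8}}=2^4 3^{3/8}\pi^2$ is right. Since the left-hand side does not depend on $H$, passing to the limit is legitimate.

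What your route buys is brevity and a structural observation. Together with the paper's direction, it shows that, given Lemma~\ref{lemma:scaling}, Harland's family of bounds and the scale-invariant bound are equivalent, the latter being exactly the large-field limit of the former.

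What it costs:
\begin{itemize}
\item It is only as good as the precise constant $C(H)$ attributed to Harland.
\item It would make the paper's later claim of an alternative proof of Harland's bound circular.
\item It does not produce the intermediate estimate \eqref{eq:intermediateVK} or the chain of Hölder/Sobolev estimates. The paper reuses these in Lemma~\ref{lemma:loc} for the localized bounds and in Lemma~\ref{lemma:asymptotic} for the lower bound on $\|\nabla\m_H\|_{L^2}$.
\end{itemize}

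Finally, your ``care point'' is unnecessary. The map $\m$ is fixed while $H\to\infty$, and the exact $\theta$-formula has the same $(12H)^{\frac14}$ asymptotics for every $\theta\in[0,1]$, so no uniformity in $\m$ is needed.
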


\begin{proof}
We use the Whitehead representation
\begin{align*}
    Q[\m]
    =
    \frac{1}{16\pi^2}
    \int_{\R^3}
    \A\cdot\F
    \d{x},
    \qquad
    \nabla \times \A=\F,
    \qquad
    \nabla \cdot \A=0.
\end{align*}
By Hölder's inequality,
\begin{align*}
    |Q[\m]|
    \le
    \frac{1}{16\pi^2}
    \|\A\|_{L^6}
    \|\F\|_{L^{\frac65}}.
\end{align*}
In Coulomb gauge and by the sharp Sobolev inequality on $\R^3$ (cf. \cite{Talenti1976BestSobolevConstant}),
\begin{align*}
    \|\A\|_{L^6}
    \le
    \frac{2^{\frac23}}{\sqrt{3}\pi^{\frac23}}
    \|\nabla\A\|_{L^2}
    =
    \frac{2^{\frac23}}{\sqrt{3}\pi^{\frac23}}
    \|\F\|_{L^2}.
\end{align*}
and Hölder interpolation $L^1\cap L^2\subset L^{\frac65}$ yield
\begin{align*}
    |Q[\m]| \le \frac{2^{\frac23}}{16\sqrt{3}\pi^{\frac83}} \|\F\|_{L^1}^{\frac23} \|\F\|_{L^2}^{\frac43}.
\end{align*}
As in the Vakulenko-Kapitanskii-type estimate used by Ward \cite{Ward1999Hopfions}, and with $ \m\cdot\Delta\m=-|\nabla\m|^2$, one has
\begin{align*}
    |\F| \le \frac12|\nabla\m|^2 \qand |\F| \le \frac12|\Delta\m|.
\end{align*}
Hence, we obtain the intermediate estimate
\begin{align}
    \label{eq:intermediateVK}
    |Q[\m]|^{\frac34}
    \le
    \frac{1}{2^4 3^{\frac38}\pi^2}
    \|\nabla\m\|_{L^2}
    \|\Delta\m\|_{L^2}.
\end{align}
Finally, using integration by parts yields
\begin{align*}
|Q[\m]|^{\frac34} \le \frac{1}{2^4 3^{\frac38}\pi^2} \|\m-\e{3}\|_{L^2}^{\frac12} \|\Delta\m\|_{L^2}^{\frac32}.
\end{align*}
\end{proof}

Lemma~\ref{lemma:scaling} and Lemma~\ref{lemma:scaling_invariant_topological_bound} together result in an alternative proof of the topological lower bound in \cite{harland}
\begin{align*}
     \E_H[\m] \;\ge\; \frac{2^{\frac{9}{2}} \pi^2}{3^{\frac58}} \frac{\sqrt{1+12H}-2}{\sqrt{\sqrt{1+12H}-1}} |Q[\m]|^{\frac34}
\end{align*}
for every $\m \in H^2(\R^3;\S^2)$. Harland suggested that a stronger lower bound might result from exploiting all terms in the energy simultaneously. Since our exact scaling optimization already retains the full energy and still yields the same coefficient, such an improvement cannot arise from the scaling balance alone. It would require a genuinely sharper topological estimate or additional structural information.

For our purposes, it is crucial to obtain localized estimates on the topological density. The argument is closely related to the classical Vakulenko-Kapitanskii bound \cite{vakulenkostability}.

\begin{lemma}\label{lemma:loc}
Let $B \subset \R^3$ be measurable and let $\m \in H^2(\R^3;\S^2)$. There exist constants $C_0, C_1 > 0$, independent of $\m$ and $B$, such that
\begin{align*}
\int_B |\F \cdot \A|  \d x  &\le C_0 \E_H[\m] |B|^{\frac13},\\[0.5em]
\int_B |\F \cdot \A|  \d x  &\le C_1 \E_H[\m]^{\frac76} \E_H[\m;B]^{\frac16},
\end{align*}
where $\A\in L^6(\R^3;\R^3)$ is chosen in Coulomb gauge, i.e. $\nabla \cdot \A = 0$.
\end{lemma}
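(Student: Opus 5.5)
The plan is to combine Hölder's inequality on $B$ with global $L^p$ bounds on $\A$ and local $L^p$ bounds on $\F$, all controlled by the energy through the coercivity \eqref{eq:densityCoercivity}.

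\textbf{Bounds on $\A$ and $\F$.} In Coulomb gauge, the sharp Sobolev inequality from the proof of Lemma~\ref{lemma:scaling_invariant_topological_bound} gives
\[
\|\A\|_{L^6}\lesssim \|\F\|_{L^2}.
\]
The pointwise bound $|\F|\le \frac12|\Delta\m|$ then yields
\[
\|\F\|_{L^2}^2\lesssim \|\Delta\m\|_{L^2}^2\lesssim_H \E_H[\m],
\]
by \eqref{eq:densityCoercivity}, since $\E_H[\m]=\int e(\m)$. Consequently $\|\A\|_{L^6}\lesssim \E_H[\m]^{1/2}$. Locally, the same two estimates give $\|\F\|_{L^2(B)}^2\lesssim \E_H[\m;B]$, where $\E_H[\m;B]=\int_B e(\m)\,\d x$.

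\textbf{First inequality.} Apply Hölder on $B$ with exponents $6$, $2$, $3$:
\[
\int_B|\F\cdot\A|\,\d x\le \|\A\|_{L^6}\,\|\F\|_{L^2(B)}\,|B|^{1/3}.
\]
Bounding both norms globally by $\E_H[\m]^{1/2}$ gives $C_0\,\E_H[\m]\,|B|^{1/3}$.

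\textbf{Second inequality.} Here we avoid $|B|$ and instead keep a small power of the local energy. Use Hölder with $\A\in L^6$ and $\F\in L^{6/5}(B)$:
\[
\int_B|\F\cdot\A|\,\d x\le \|\A\|_{L^6}\,\|\F\|_{L^{6/5}(B)}.
\]
Interpolate $L^{6/5}$ between $L^1$ and $L^2$, exactly as in Lemma~\ref{lemma:scaling_invariant_topological_bound}, with $\frac56=\frac{\alpha}{1}+\frac{1-\alpha}{2}$, so $\alpha=\frac23$. This gives
\[
\|\F\|_{L^{6/5}(B)}\le \|\F\|_{L^1(B)}^{2/3}\,\|\F\|_{L^2(B)}^{1/3}.
\]
For the $L^1$ factor, use $|\F|\le\frac12|\nabla\m|^2$ to get $\|\F\|_{L^1}\le \frac12\|\nabla\m\|_{L^2}^2$. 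Integration by parts and Cauchy–Schwarz, as at the end of Lemma~\ref{lemma:scaling_invariant_topological_bound}, give
\[
\|\nabla\m\|_{L^2}^2\le \|\m-\e3\|_{L^2}\,\|\Delta\m\|_{L^2}\lesssim_H \E_H[\m].
\]
Collecting the factors:
\[
\int_B|\F\cdot\A|\,\d x\lesssim \E_H[\m]^{1/2}\cdot \E_H[\m]^{2/3}\cdot \E_H[\m;B]^{1/6}=\E_H[\m]^{7/6}\,\E_H[\m;B]^{1/6}.
\]
The exponents match the claim.

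\textbf{Main obstacle.} The delicate point is the local bound $\|\F\|_{L^2(B)}^2\lesssim\E_H[\m;B]$. The recentred density $e(\m)$ in \eqref{eq:energyDensity} is pointwise coercive, and the pointwise bound $|\F|\le\frac12|\Delta\m|$ holds a.e. Together these give it with no boundary terms. The identity $\E_H[\m]=\int e(\m)$ used above is a global integration by parts, so it needs only $\m-\e3\in H^2$, and measurability of $B$ suffices. We must also make sure $\E_H[\m;B]$ is defined through the density $e(\m)$, not the original integrand, which is not pointwise nonnegative.

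The constants depend only on $H$, through \eqref{eq:densityCoercivity} for $H>\frac14$, and not on $\m$ or $B$, which is what the statement requires. Strictly, the $L^1$ step could also be localized, but only one local factor is needed.
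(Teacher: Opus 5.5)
Your proof is correct and follows essentially the same route as the paper: Hölder against $\A\in L^6$, the Coulomb-gauge Sobolev bound $\|\A\|_{L^6}\lesssim\|\F\|_{L^2}$, the $L^1$--$L^2$ interpolation of $\|\F\|_{L^{6/5}(B)}$, and the pointwise bounds on $|\F|$ combined with coercivity. The differences are cosmetic. The paper writes the local factors as $\|\nabla\m\|_{L^2(B)}$ and $\|\nabla\m\|_{L^4(B)}$, which is equivalent to your $\|\Delta\m\|_{L^2(B)}$ because $|\nabla\m|^2=-\m\cdot\Delta\m\le|\Delta\m|$; and your direct $(6,2,3)$ Hölder in the first inequality replaces the paper's step $\|\nabla\m\|_{L^2(B)}\le|B|^{1/4}\|\nabla\m\|_{L^4(B)}$.
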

\begin{proof}
Using the same estimates as in the proof of Lemma~\ref{lemma:scaling_invariant_topological_bound}, applied locally on $B$ where appropriate, we obtain
\begin{align*}
\int_B |\F\cdot\A|\d x
\lesssim \|\nabla \m\|_{L^2(B)}^{\frac43}\|\nabla \m \|_{L^4(B)}^{\frac23}\|\nabla \m\|_{L^4(\R^3)}^2.
\end{align*}
Thus 
\begin{align*}
    \|\nabla \m\|_{L^2(B)}^{\frac43} \le c |B|^{\frac13} \| \nabla \m\|_{L^4(B)}^{\frac43} \quad \text{ and } \quad  \|\nabla \m\|_{L^4(\R^3)}^4 \lesssim \E_H[\m]
\end{align*}
yield the first inequality. To obtain the second estimate, we use 
\begin{align*}
\| \nabla \m\|_{L^4(B)}^{\frac23}\lesssim \E_H[\m;B]^{\frac16} \quad \text{ and } \quad \| \nabla \m\|_{L^2(\R^3)}^2 \lesssim \E_H[\m].
\end{align*}
\end{proof}

\subsection{Upper energy bound} \label{sec:upperbound}
%Note that the lower bound in \cite{harland} reads
%\begin{align*}
%    I_k(H) \gtrsim H^{\frac14}|k|^{\frac34}.
%\end{align*}
We derive a matching upper bound for \eqref{eq:Harland}.
\begin{lemma}\label{lemma:energygrowth}
There exists a constant $C>0$ such that for all $k\in\Z$ and $H>\tfrac14$,
\begin{align*}
I_k(H) \le CH^{\frac14}|k|^{\frac34}.
\end{align*}
\end{lemma}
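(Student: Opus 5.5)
The plan is to combine the exact scaling formula of Lemma~\ref{lemma:scaling} with explicit competitors in every class $Q=k$ whose scale-invariant quantity $\|\m-\e{3}\|_{L^2}^{\frac12}\|\Delta\m\|_{L^2}^{\frac32}$ grows at most like $|k|^{\frac34}$. By Lemma~\ref{lemma:scaling}, for any admissible $\m$ with $Q[\m]=k$,
\begin{align*}
I_k(H)\le \inf_{\lambda>0}\E_H[\m^\lambda]\le \frac{\sqrt2}{3}(12H)^{\frac14}\|\m-\e{3}\|_{L^2}^{\frac12}\|\Delta\m\|_{L^2}^{\frac32},
\end{align*}
since rescaling preserves $Q$. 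This gives the factor $H^{\frac14}$ uniformly in $H>\frac14$ at no cost. By \eqref{eq:reflection} it suffices to treat $k>0$; for $k=0$ the constant map gives $I_0=0$.

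It therefore remains to construct, for each $k\ge1$, a map $\m_k\in H^2(\R^3;\S^2)$ with $Q[\m_k]=k$ and
\begin{align*}
\|\m_k-\e{3}\|_{L^2}^{\frac12}\|\Delta\m_k\|_{L^2}^{\frac32}\lesssim k^{\frac34}.
\end{align*}
First build a unit-charge map $\m_1$: take the standard Hopf map, modify its radial profile so that it equals $\e{3}$ outside a ball (or decays exponentially), and keep it smooth, so that $\m_1-\e{3}\in C_c^\infty$. The topology is unchanged, since the preimage of the south pole is still the unit circle and the fibres remain linked. Next, by the additivity of the Hopf invariant under disjoint-support superposition, let $\m_k$ consist of $N\sim k$ translated copies of $\m_1$ placed at distinct lattice points of a cube. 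Then $\|\m_k-\e{3}\|_{L^2}^2\sim k$ and $\|\Delta\m_k\|_{L^2}^2\sim k$, which only gives the product $\sim k$, not $k^{\frac34}$.

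Improving $k$ to $k^{\frac34}$ is the main obstacle. The fix is to place the $k$ unit Hopfions densely: pack them at spacing of order $1$ inside a cube of side $L\sim k^{\frac13}$. Then $\m-\e{3}$ is of order $1$ on a volume $\sim k$ and $\Delta\m$ is of order $1$ there, which still gives $\sim k$. Dense packing alone does not help, because the functional is additive over disjoint supports. A genuinely different construction is needed, in which the charge is carried by higher winding rather than more volume. For this, use the Hopf charge $k=pq$ maps of the rational-map or toroidal type $(z_1,z_2)\mapsto z_1^p/z_2^q$ composed with stereographic projection and cut off. Alternatively, and more simply, use the standard approach of Lin--Yang and Kapitanskii: take $\m=\bs{\phi}\circ(\text{degree-}k\text{ map})$, or a toroidal ansatz with winding numbers $(n,m)$ and $k=nm$, $n\sim m\sim \sqrt k$. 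In the toroidal ansatz on a torus of major radius $R$ with a core of size $r$, the Hopf charge is $nm$. The curvature-type quantities scale like $n/R$ and $m/r$, so $\|\Delta\m\|_{L^2}^2\sim Rr^2(n/R+m/r)^4$ and $\|\m-\e{3}\|_{L^2}^2\sim Rr^2$. Choosing $R\sim r$ gives a product $\sim (R^3)^{\frac14}(R^3)^{\frac34}(n+m)^3/R^3\sim (n+m)^3$. That is $k^{\frac32}$, which is worse, so optimise differently.

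The cleaner route exploits the interpolation inequality already used: $\|\m-\e{3}\|^{\frac12}\|\Delta\m\|^{\frac32}$ is comparable, via optimal scaling, to the Skyrme--Faddeev-like quantity. It is therefore enough to take Lin--Yang's / Kapitanskii's known competitors attaining $E_F\lesssim k^{\frac34}$, which split $\R^3$ into $k^{\frac13}\times k^{\frac13}\times k^{\frac13}$-type linked configurations with gradients of order $k^{\frac14}$ on volume $k^{\frac14}$-adapted regions. Concretely, aim for a map with $|\nabla\m|\lesssim 1$, $|\nabla^2\m|\lesssim1$ supported in a ball of radius $\sim k^{\frac14}$, carrying charge $k$. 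This gives $\|\m-\e{3}\|_{L^2}^2\sim k^{\frac34}$ and $\|\Delta\m\|_{L^2}^2\sim k^{\frac34}$, hence product $\sim k^{\frac34}$. Producing this explicit charge-$k$ map with bounded second derivatives in a ball of volume $k^{\frac34}$ is the step I expect to require the most care. I would adapt the Kapitanskii--Vakulenko construction of a tube-bundle of $k^{\frac12}$ parallel fibres twisted around each other, then cut off and smooth it.
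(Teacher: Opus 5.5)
Your reduction is correct and is the same as the paper's. By Lemma~\ref{lemma:scaling} (the paper then applies Young's inequality to pass to $\int|\nabla^2\m|^2+|\m-\e{3}|^2$), everything comes down to exhibiting, for each $k$, an admissible map of charge $k$ whose scale-invariant quantity is $\lesssim k^{3/4}$. Your final target is also the right one: a charge-$k$ map with $|\nabla^2\m|\lesssim 1$ that equals $\e{3}$ outside a ball of radius $\sim k^{1/4}$. That is exactly what the paper's competitor achieves.

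\textbf{The gap.} You never produce this map. The proposal stops at ``I would adapt\dots'', and this construction is the whole content of the lemma beyond the scaling step. None of the routes you list delivers it. Precomposing a unit Hopfion with a degree-$k$ map $\S^3\to\S^3$ multiplies $Q$ only linearly. Such a map has Lipschitz constant $\gtrsim k^{1/3}$, so after dilating to bounded gradients you again occupy volume $\sim k$, no better than $k$ disjoint copies. Your toroidal count gives $k^{3/2}$. Finally, the claimed comparability of $\|\m-\e{3}\|_{L^2}^{1/2}\|\Delta\m\|_{L^2}^{3/2}$ with the Faddeev energy is false: the standard Hopf map has finite $E_F$ but $\m-\e{3}\notin L^2$. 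So the bound $E_F\lesssim k^{3/4}$ cannot simply be transferred. What is needed is the pointwise structure of the competitor, which the paper quotes from Lin--Yang.

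\textbf{The missing idea.} The Hopf invariant is \emph{quadratic} under post-composition: $Q[g\circ\m]=(\deg g)^2\,Q[\m]$ for $g:\S^2\to\S^2$. Take a smooth unit-charge $\m_0$ with $\m_0=\e{3}$ off $B_1$. Take a degree-$\ell$ map $g_\ell$ with $g_\ell\equiv\e{3}$ near $\e{3}$, $|\nabla g_\ell|\lesssim\ell^{1/2}$ and $|\nabla^2 g_\ell|\lesssim\ell$ (e.g.\ $\ell$ bubbles on disjoint caps of diameter $\sim\ell^{-1/2}$). Then $\overline{\m}=g_\ell\circ\m_0(\cdot/\sqrt{\ell})$ has the following properties:
\begin{enumerate}[label=(\alph*)]
\item $Q[\overline{\m}]=\ell^2$, and $\overline{\m}=\e{3}$ off $B_{\sqrt{\ell}}$;
\item by the chain rule, $|\nabla^2\overline{\m}|\lesssim\ell\cdot\ell^{-1}+\ell^{1/2}\cdot\ell^{-1}\lesssim1$;
\item hence both $\|\overline{\m}-\e{3}\|_{L^2}^2$ and $\|\Delta\overline{\m}\|_{L^2}^2$ are $\lesssim\ell^{3/2}=(\ell^2)^{3/4}$.
\end{enumerate}
So the charge grows like $\ell^2$ while the support volume grows only like $\ell^{3/2}$; this is the mechanism you were missing.

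\textbf{Non-square charges.} The construction above only covers square charges, and you do not address the general case either. For $\ell^2\le k<(\ell+1)^2$, the paper adds $k-\ell^2\le2\ell+1$ far-separated translated copies of $\m_0$, at cost $O(\ell)=O(k^{1/2})$. It then uses the quasi-locality of the Whitehead integral, as in Lemma~\ref{lemma:dichotomy}, to conclude $Q=k$.
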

\begin{proof}
We may assume $k>0$. Choose $\ell\in\N$ such that
\begin{align*}
\ell^{2} \le k < (\ell+1)^{2}.
\end{align*}
By \cite{LinYang2004}, there exist maps $\overline{\m},\m_{0}\in H^{2}(\R^{3};\S^{2})$ satisfying
\begin{align*}
\overline{\m}=\e{3} \text{ on } \R^{3}\setminus B_{\sqrt{\ell}}, \quad Q[\overline{\m}]=\ell^{2}, \\
\m_{0}=\e{3} \text{ on } \R^{3}\setminus B_{1},\quad Q[\m_{0}]=1,
\end{align*}
and a constant $C>0$ such that
\begin{align*}
\|\nabla^{2}\overline{\m}\|_{L^{\infty}} \le C, \quad \|\nabla^{2}\m_{0}\|_{L^{\infty}} \le C.
\end{align*}
Define $\m\in H^{2}(\R^{3};\S^{2})$ by setting $\m=\overline{\m}$ on $B_{\sqrt{\ell}}$ and placing $(k-\ell^{2})$ translated copies of $\m_{0}$ at points $x_{1},\dots,x_{k-\ell^{2}}$ with
\begin{align*}
|x_{i}-x_{j}|\gg 3,\quad |x_{i}|\gg\sqrt{\ell}+1.
\end{align*}
Exploiting the quasi-locality of the Hopf index density (in Coulomb gauge), the supports may be chosen sufficiently far apart so that
\begin{align*}
Q[\m] = \ell^{2} + (k-\ell^{2}) = k.
\end{align*}
Lemma~\ref{lemma:scaling}, together with Young's inequality, then yields
\begin{align*}
   I_k(H) \le  \inf_{\lambda >0}\E_H[\m^{\lambda}] \lesssim H^{\frac14}\int_{\R^3} |\nabla^2 \m|^2 + |\m - \e{3}|^2 \d x .
\end{align*}
Using disjointness of supports and the uniform bounds on $\nabla^{2}\overline{\m}$ and $\nabla^{2}\m_{0}$, together with $|B_{\sqrt{\ell}}|=|B_1|  \ell^{\frac32}$ and $0 \le k-\ell^{2}\le 2\ell+1$, we obtain
\begin{align*}
I_k(H) \lesssim H^{\frac14}(\ell^{\frac32} + \ell)\lesssim H^{\frac14} k^{\frac34}.
\end{align*}
\end{proof}

\section{Energy splitting and compactness}\label{sec:dichotomy}
Fix $H>\frac14$ and a minimizing sequence $(\m_n)_{n\in\mathbb N}\subset H^2(\mathbb R^3;\mathbb S^2)$ of \eqref{Ek} with
\begin{align*}
\E_H[\m_n] \to I_k(H) \quad \text{as } n\to\infty,
\end{align*}
and set
\begin{align*}
M := \sup_{n\in\mathbb N} \E_H[\m_n] < \infty .
\end{align*}
Applying the concentration-compactness principle by P.L.Lions (cf. \cite{lions}) to the energy density $e(\m_n)$ yields the following three cases:
\begin{enumerate}[label=(\roman*), ref=(\roman*)]
\item\label{case:vanishing} vanishing:
For every $R>0$ we have that
\begin{align*}
\lim \limits_{n \to \infty} \sup \limits_{y \in \R^3} \int_{B_R(y)} e(\m_n) \d{x} = 0
\end{align*}
\item\label{case:compactness} compactness: There exists a sequence $(y_n)_{n \in \N}$ such that for every $\varepsilon > 0$ we can find a radius $R>0$ with 
\begin{align*}
\int_{\R^3 \setminus B_R(y_n)} e(\m_n) \d{x} \le \varepsilon
\end{align*}
for $n \in \N$ large enough.
\item\label{case:dichotomy} dichotomy: There is a $0 < \lambda < 1$ such that we can find for $\varepsilon > 0 $ a radius $R>0$, a sequence of radii $R_n > 0$, $R_n \to \infty$, and a sequence $(y_n)_{n \in \N}$ such that 
\begin{align*}
&\left| \lambda \E_H[\m_n] - \E_H[\m_n;B_R(y_n)]\right|< \eps \quad \text{and} \\&\left| (1-\lambda)\E_H[\m_n] - \E_H[\m_n;\R^3 \setminus B_{R_n}(y_n)]\right|  < \varepsilon,
\end{align*}
for $n \in \N$ large enough.
\end{enumerate}

\begin{lemma}\label{lemma:vanishing}
Let $(\m_n)_{n \in \N}$ be a minimizing sequence for $I_k(H)$ with $k\in\mathbb Z\setminus\{0\}$.
Then, alternative~\ref{case:vanishing} vanishing cannot occur.
\end{lemma}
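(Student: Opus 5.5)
We argue by contradiction and show that vanishing forces $Q[\m_n]\to 0$, which is incompatible with $Q[\m_n]=k\neq 0$. The tool is the second localized estimate of Lemma~\ref{lemma:loc}, combined with a partition of $\R^3$ into cubes of bounded overlap.

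First, fix $R>0$ and cover $\R^3$ by the cubes $B^{(i)} = R z_i + [0,R)^3$, $z_i\in\Z^3$. These cubes are disjoint, and each lies in a ball $B_{\sqrt3 R}(y_i)$. Let $\A_n$ be the Coulomb-gauge potential of $\F_n$. The Whitehead formula and Lemma~\ref{lemma:loc}, applied to each cube, give
\begin{align*}
16\pi^2 |k| = \Big|\int_{\R^3}\F_n\cdot\A_n \d x\Big| \le \sum_i \int_{B^{(i)}}|\F_n\cdot\A_n|\d x \le C_1 \E_H[\m_n]^{\frac76}\sum_i \E_H[\m_n;B^{(i)}]^{\frac16}.
\end{align*}

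The exponent $\tfrac16<1$ is a problem, because a sum of sixth roots is not controlled by the total energy. To fix this, I would redo the proof of Lemma~\ref{lemma:loc} so that more than one local factor appears. In that proof, the Hölder step bounds the local term by $\|\F\|_{L^{6/5}(B)}\|\A\|_{L^6(\R^3)}$. I would split it instead as
\begin{align*}
\|\F\|_{L^{\frac65}(B)} \lesssim \|\nabla\m\|_{L^{\frac{12}{5}}(B)}^2,
\end{align*}
and interpolate $L^{12/5}(B)$ between $L^2(B)$ and $L^4(B)$. Both of these norms are controlled by $\E_H[\m;B]$ up to lower-order terms. The local term $|\nabla \m|^2$ is handled through \eqref{eq:densityCoercivity}: on a cube one has the local interpolation $\|\nabla \m\|_{L^2(B)}^2\lesssim \|\nabla^2\m\|_{L^2(B^*)}\|\m-\e3\|_{L^2(B^*)}+R^{-2}\|\m-\e 3\|_{L^2(B^*)}^2$ on a slightly enlarged cube $B^*$. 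One must also pass from $\Delta$ to $\nabla^2$ locally; using cutoffs, the commutator terms are again bounded by the energy on $B^*$. This yields a bound
\begin{align*}
\int_{B}|\F\cdot\A|\d x\lesssim_R \E_H[\m]^{\frac12}\,\E_H[\m;B^*]^{\alpha}, \qquad \alpha>1.
\end{align*}
Here $\alpha = \tfrac{6}{5}\cdot\tfrac{5}{6}\cdot\ldots$; it is enough that the local power of the energy exceeds $1$. Since $\nabla\m$ is bounded in $L^4$ and in $L^2$, the $L^{12/5}$-norm squared over $B$ to the power $\tfrac56\cdot\tfrac65$ is a local quantity of homogeneity above $1$ in $e$.

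With such a bound, the sum is handled by extracting the supremum over cubes:
\begin{align*}
\sum_i \E_H[\m_n;B^{(i)*}]^{\alpha} \le \Big(\sup_i \E_H[\m_n;B^{(i)*}]\Big)^{\alpha-1}\sum_i\E_H[\m_n;B^{(i)*}] \lesssim \Big(\sup_{y}\int_{B_{3R}(y)} e(\m_n)\d x\Big)^{\alpha-1} M.
\end{align*}
Each enlarged cube sits in some $B_{3R}(y)$, and the enlarged cubes overlap boundedly. Under vanishing, the right-hand side tends to zero, so $|k|\to 0$, which is the desired contradiction.

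\textbf{Main obstacle.} The key step is obtaining a local power $\alpha>1$ in the localized Vakulenko--Kapitanskii estimate, since the stated second estimate of Lemma~\ref{lemma:loc} has the wrong homogeneity for summation. The delicate point is controlling $\nabla \m$ on $B$ by the energy on a neighbourhood, since $e$ involves $\Delta\m$ rather than the full Hessian. If this local elliptic estimate proves awkward, I would fall back on Lions' lemma: vanishing of $e(\m_n)$ gives $\m_n-\e3\to0$ in $L^p$ for $2<p<6$, hence $\nabla\m_n\to0$ in $L^{12/5}$ by interpolation with the $H^2$ bound. Then
\begin{align*}
|Q[\m_n]|\lesssim \|\F_n\|_{L^{\frac65}}\|\F_n\|_{L^2}\lesssim\|\nabla\m_n\|_{L^{\frac{12}5}}^2\|\Delta \m_n\|_{L^2}\to0
\end{align*}
directly. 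This fallback is probably the cleanest route.
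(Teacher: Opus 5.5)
Your main route fails as written. The claimed estimate
\[
\int_B|\F\cdot\A|\,\d{x}\lesssim_R \E_H[\m]^{\frac12}\,\E_H[\m;B^*]^{\alpha},\qquad \alpha>1,
\]
is false. Fix $R$ and let $\m=\Psi(\delta\bs{\phi})$ on $B^*$, with $\bs{\phi}$ supported in $B$. Place a fixed, compactly supported unit-charge configuration just outside $B^*$. Then $\E_H[\m;B^*]\sim\delta^2$ and $|\F|\sim\delta^2$ on $B$. The Coulomb potential on $B$, however, is dominated by the nearby Hopfion and is of order one independently of $\delta$. So the left side is generically of order $\delta^2$, while the right side is of order $\delta^{2\alpha}$.

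The local homogeneity of $\|\F\|_{L^{6/5}(B)}$ in the energy is exactly one. Your own $L^2$--$L^4$ interpolation gives only $\E^{2/3}\E^{1/6}=\E^{5/6}$, because $|\nabla\m|^4\le|\Delta\m|^2$ is lossy at small amplitude. The cube decomposition can be rescued, but the gain has to come from summing in $\ell^{6/5}$ after a global H\"older step, not from a superlinear local bound:
\[
\|\F\|_{L^{6/5}(\R^3)}^{6/5}=\sum_i\|\F\|_{L^{6/5}(B^{(i)})}^{6/5}\lesssim_R\sum_i\E_H[\m;B^{(i)*}]^{\frac65}\le\Big(\sup_i\E_H[\m;B^{(i)*}]\Big)^{\frac15}\,CM .
\]
Here you use $\|\F\|_{L^{6/5}(B)}\lesssim\|\nabla\m\|_{L^{12/5}(B)}^2\lesssim_R\|\m-\e{3}\|_{H^2(B)}^2\lesssim_R\E_H[\m;B^*]$, which follows from Sobolev on the cube and the interior elliptic estimate.

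Your fallback, by contrast, is a complete and correct proof. Coercivity gives a uniform $H^2$ bound on $\m_n-\e{3}$ and uniform local $L^2$-vanishing. Lions' lemma then gives $\m_n-\e{3}\to0$ in $L^3$. The Nirenberg inequality $\|\nabla u\|_{L^{12/5}}^2\lesssim\|u\|_{L^3}\|\nabla^2u\|_{L^2}$ gives $\nabla\m_n\to0$ in $L^{12/5}$. Finally, $|Q[\m_n]|\lesssim\|\nabla\m_n\|_{L^{12/5}}^2\|\Delta\m_n\|_{L^2}\to0$, contradicting $Q[\m_n]=k\neq0$.

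This differs from the paper's argument. The paper uses interior elliptic estimates on unit balls to upgrade vanishing to $\sup_y\|\m_n-\e{3}\|_{H^2(B_1(y))}\to0$. By $H^2\hookrightarrow C^0$ this gives $\m_n\to\e{3}$ uniformly, so $\m_n$ misses $-\e{3}$ and is null-homotopic. The paper's route is topological: it relies on continuity of $H^2$ maps and homotopy invariance of $Q$. Yours is purely analytic and quantitative: it shows $Q[\m_n]\to0$ directly from the Whitehead integral, and replaces local elliptic regularity by Lions' lemma plus a Gagliardo--Nirenberg inequality. You should present only the fallback.
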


\begin{proof}
In the vanishing case, interior elliptic estimates yield
\begin{align*}
    \|\m_n - \e 3\|_{H^2(B_1(y))}^2\lesssim \int_{B_2(y)} |\Delta \m_n|^2 + |\m_n - \e 3|^2 \d x 
\end{align*}
for every $y \in \R^3.$ Moreover, $H^2_{\mathrm{loc}}\hookrightarrow C^0_{\mathrm{loc}}$ implies
\begin{align*}
\|\m_n-\e{3}\|_{L^\infty(\R^3)} =\sup_{y \in \R^3}\|\m_n-\e{3}\|_{L^\infty(B_1(y))}\lesssim \sup_{y \in \R^3}\|\m_n-\e{3}\|_{H^2(B_1(y))}\to 0. 
\end{align*}
Hence $\m_n$ is null-homotopic for large $n\in \N$, contradicting $Q[\m_n]=k\neq0$.
\end{proof}

To handle the dichotomy case, we construct localized competitors for configurations in $H^2(\R^3,\S^2)$. 
\begin{lemma} \label{cut-offconstruction}
There exists a $\varepsilon_0 > 0$ such that if for $0<\varepsilon<\varepsilon_0$, $R \geq 1$ and $\m \in H^2(\R^3;\S^2)$ we have
\begin{align*}
\int_{B_{4R}\setminus B_R} |\m- \e{3}|^2 + |\Delta \m|^2 \d x \le \varepsilon,
\end{align*}
then we can find $\m^{(1)}$ and $\m^{(2)}$ with the following properties:
\begin{align*}
&\m^{(1)} = \m \text{ on } B_{2R}  &&\m^{(2)} = \m \text{ on } \R^3 \setminus B_{2R} \\
&\m^{(1)} = \e{3} \text{ on } \R^3 \setminus B_{4R} &&\m^{(2)} = \e{3} \text{ on }  B_{R} \\
\end{align*}	
and, furthermore, there is a constant $c>0$ with
\begin{align*}
\lVert \m^{(1)} - \e{3} \rVert_{H^2(\R^3 \setminus B_{2R})}^2 + \lVert \m^{(2)} - \e{3} \rVert_{H^2(B_{2R})}^2 \le c \varepsilon.
\end{align*}
\end{lemma}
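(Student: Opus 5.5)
The plan is to cut off $\m$ in the annulus and then project back onto the sphere. The projection is possible because the annulus energy is small, which forces $\m$ to be uniformly close to $\e{3}$ there.

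First, on the annulus $A=B_{4R}\setminus B_R$ we obtain a full $H^2$ bound from the hypothesis. As in the proof of Lemma~\ref{lemma:vanishing}, interior elliptic estimates on unit balls give, for every $B_1(y)$ with $B_2(y)\subset A$,
\[
\|\m-\e{3}\|_{H^2(B_1(y))}^2\lesssim \int_{B_2(y)}|\Delta\m|^2+|\m-\e{3}|^2\,\d x .
\]
Since $R\ge1$, a bounded-overlap cover of a slightly smaller annulus, say $A'=B_{7R/2}\setminus B_{3R/2}$, by such balls yields
\[
\|\m-\e{3}\|_{H^2(A')}^2\le c\,\eps .
\]
It is convenient to use radii $3R/2$ and $7R/2$, or simply to shrink the transition zone. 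The constants stay uniform in $R$ because we only use unit balls and $R\ge 1$. By the embedding $H^2(B_1(y))\hookrightarrow C^0$ we also get $\|\m-\e{3}\|_{L^\infty(A')}\lesssim\sqrt{\eps}$.

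Second, we define the cut-off fields. Fix a radial cut-off $\eta_1$ with $\eta_1=1$ on $B_{2R}$, $\eta_1=0$ outside $B_{3R}$, and $|\nabla^j\eta_1|\lesssim R^{-j}\le 1$. Let $\eta_2$ vanish on $B_{R}\cup B_{3R/2}$, equal $1$ outside $B_{2R}$, and satisfy the same derivative bounds. Set $\bs v_i=\e{3}+\eta_i(\m-\e{3})$. Then $\bs v_i-\e{3}$ is supported in the transition region (inside $A'$) or equals $\m-\e{3}$. Leibniz' rule together with $|\nabla^j\eta_i|\lesssim1$ gives
\[
\|\bs v_i-\e{3}\|_{H^2(A')}\lesssim\|\m-\e{3}\|_{H^2(A')},
\]
and $|\bs v_i-\e{3}|\le|\m-\e{3}|\lesssim\sqrt{\eps}$ there. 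Choosing $\eps_0$ small gives $|\bs v_i|\ge 1/2$, so we may define $\m^{(i)}=\bs v_i/|\bs v_i|$. By construction $\m^{(1)}=\m$ on $B_{2R}$ and $\m^{(1)}=\e{3}$ outside $B_{3R}\subset B_{4R}$. Likewise $\m^{(2)}=\m$ outside $B_{2R}$ and $\m^{(2)}=\e{3}$ on $B_{3R/2}\supset B_R$.

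Third, the $H^2$ estimate, which is the main technical step. The map $\pi(v)=v/|v|$ is smooth with bounded derivatives up to order two on $\{|v|\ge1/2\}$, and $\pi(\e{3})=\e{3}$. We have
\[
|\pi(\bs v)-\e{3}|\lesssim|\bs v-\e{3}|,\qquad |\nabla\pi(\bs v)|\lesssim|\nabla\bs v|,\qquad |\nabla^2\pi(\bs v)|\lesssim|\nabla^2\bs v|+|\nabla\bs v|^2 .
\]
The quadratic term is handled by Gagliardo–Nirenberg on unit balls: $\|\nabla\bs v\|_{L^4(B_1(y))}^4\lesssim\|\bs v-\e{3}\|_{H^2(B_1(y))}^4$. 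Summing over the cover and using that $\sum a_y^2\le(\sum a_y)^2$, we get $\|\nabla\bs v\|_{L^4(A')}^4\lesssim\eps^2\le\eps$. Alternatively, $\|\nabla \bs v\|_{L^\infty}$ need not be used, because $|\nabla \bs v|^2\le$ the $L^4$ quantity suffices.

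Hence $\|\m^{(1)}-\e{3}\|^2_{H^2(\R^3\setminus B_{2R})}\le c\eps$, since there $\m^{(1)}-\e{3}$ lives in $A'$, and similarly $\|\m^{(2)}-\e{3}\|^2_{H^2(B_{2R})}\le c\eps$. Membership $\m^{(i)}-\e{3}\in H^2(\R^3)$ follows as well. The only delicate points are keeping all constants independent of $R$, which the unit-ball localization ensures, and controlling the nonlinear second-derivative term, which the $L^4$ bound handles.
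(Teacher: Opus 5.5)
Your proof is correct and follows essentially the paper's route: uniform-in-$R$ interior elliptic estimates on the annulus, Sobolev embedding to force $\m$ uniformly close to $\e{3}$, a cut-off in a chart around $\e{3}$, and the chain rule. The only real difference is the chart (you cut off $\m-\e{3}$ linearly and renormalize by $v\mapsto v/|v|$, whereas the paper writes $\m=\Psi(\bar m)$ with $\Psi(u)=(u,\sqrt{1-|u|^2})$ and cuts off $\bar m$), and your explicit $L^4$ control of the $|\nabla\bs v|^2$ term spells out a step the paper leaves implicit. One small repair is needed: for $1\le R<2$ no ball $B_2(y)$ both reaches the points of $A'$ and stays inside $B_{4R}\setminus B_R$ (for $R=1$ such a ball cannot fit at all), so use balls of a fixed radius such as $1/4$ with doubles of radius $1/2$, which keeps every constant independent of $R\ge 1$.
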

\begin{proof}
We construct only $\m^{(1)}$. The construction of $\m^{(2)}$ is analogous.
By the interior elliptic estimate on annuli, applied after scaling, we have
\begin{align*}
    \|\m-\e{3}\|_{H^2(B_{3R}\setminus B_{2R})}^2
    \le
    C
    \int_{B_{4R}\setminus B_R}
    |\m-\e{3}|^2+|\Delta\m|^2 \d x ,
\end{align*}
where $C$ is independent of $R\geq 1$. Here one covers $B_{3R}\setminus B_{2R}$ by balls adapted to the scale $R$ and applies the standard interior elliptic estimate on slightly larger balls. The overlap is uniformly bounded, so the constant is independent of $R\geq 1$. Hence, by the assumption,
\begin{align*}
    \|\m-\e{3}\|_{H^2(B_{3R}\setminus B_{2R})}^2
    \le C\varepsilon .
\end{align*}
By Sobolev embedding, after decreasing $\varepsilon_0$ if necessary, we may assume
\begin{align*}
    |\bar m|\le \frac{1}{\sqrt{2}},
    \qquad
    m_3\geq \frac{1}{\sqrt{2}}
    \qquad
    \text{on } B_{3R}\setminus B_{2R},
\end{align*}
where $\bar m=(m_1,m_2)$. Thus on this annulus
\begin{align*}
    \m
    =
    \bigl(\bar m,\sqrt{1-|\bar m|^2}\,\bigr).
\end{align*}

Let $\eta\in C^\infty([0,\infty))$ satisfy
\begin{align*}
    0\le \eta\le 1,
    \qquad
    \eta(t)=1 \text{ for } t\le 2
    \quad \text{ and } \quad
    \eta(t)=0 \text{ for } t\geq 3 .
\end{align*}
Set
\begin{align*}
    \eta_R(x)=\eta\left(\frac{|x|}{R}\right).
\end{align*}
Then
\begin{align*}
    |\nabla^j\eta_R|\le C R^{-j},
    \qquad
    j=1,2 .
\end{align*}
We define
\begin{align*}
    \m^{(1)}(x)
    =
    \begin{cases}
        \m(x),
        & x\in B_{2R},
        \\
        \left(
        \eta_R(x)\bar m(x),
        \sqrt{1-|\eta_R(x)\bar m(x)|^2}
        \right),
        & x\in B_{3R}\setminus B_{2R},
        \\
        \e{3},
        & x\in \R^3\setminus B_{3R}.
    \end{cases}
\end{align*}
This is well defined because $|\eta_R\bar m|\le 1/\sqrt{2}$. Moreover
\begin{align*}
    \m^{(1)}=\m \text{ on } B_{2R},
    \qquad
    \m^{(1)}=\e{3} \text{ on } \R^3\setminus B_{3R}.
\end{align*}
Let
\begin{align*}
    \Psi(u)=\bigl(u,\sqrt{1-|u|^2}\,\bigr).
\end{align*}
On $\{|u|\le 1/\sqrt{2}\}$ the map $\Psi$ has bounded $C^2$ norm. Since
$\m^{(1)}=\e{3}$ on $\R^3\setminus B_{3R}$, we have
\begin{align*}
    \|\m^{(1)}-\e{3}\|_{H^2(\R^3\setminus B_{2R})}^2
    =
    \|\m^{(1)}-\e{3}\|_{H^2(B_{3R}\setminus B_{2R})}^2 
\end{align*}
and $\m^{(1)} - \e{3} = \Psi(\eta_R \bar m) - \Psi(0)$ on $B_{3R}\setminus B_{2R}$. Therefore, using the bounds on $\eta_R$ and $R\geq 1$, we have
\begin{align*}
    \|\m^{(1)}-\e{3}\|_{H^2(\R^3\setminus B_{2R})}^2
    &\le
    C\|\eta_R\bar m\|_{H^2(B_{3R}\setminus B_{2R})}^2 \le
    C\|\bar m\|_{H^2(B_{3R}\setminus B_{2R})}^2
    \\
    &\le
    C\|\m-\e{3}\|_{H^2(B_{3R}\setminus B_{2R})}^2 \le C\varepsilon .
\end{align*}
This proves the desired estimate for $\m^{(1)}$.
\end{proof}
We now combine the cut-off construction with the Whitehead representation to show that dichotomy induces a nontrivial splitting of both the energy and the Hopf charge.
\begin{lemma}\label{lemma:dichotomy}
Let $H>\frac14$ and $k \in \Z \setminus \{0\}$. Assume further there exists a minimizing sequence $(\m_n)_{n \in \N}$ for $I_k(H)$ such that condition~\ref{case:dichotomy} dichotomy is satisfied. Then there exists $\ell \in \Z \setminus\{0,k\}$ with
\begin{align*}
I_k(H) \geq I_\ell(H) + I_{k-\ell}(H).
\end{align*}
\end{lemma}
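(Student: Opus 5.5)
We follow the Lin--Yang dichotomy argument, using Lemma~\ref{cut-offconstruction} to cut and Lemma~\ref{lemma:loc} to control the Hopf charge.

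\textbf{Step 1: the cut-off split.} Fix $\varepsilon>0$ small and take $R$, $R_n\to\infty$ and $y_n$ from alternative~\ref{case:dichotomy}; translating, we may assume $y_n=0$. The energy in $B_{R_n}\setminus B_R$ is at most $2\varepsilon$ for large $n$. By \eqref{eq:densityCoercivity} the same holds, up to constants, for $\int |\Delta\m_n|^2+|\m_n-\e3|^2$ on that region. Since $R_n/R\to\infty$, there are about $\log_4(R_n/R)$ disjoint dyadic annuli of the form $B_{4\rho}\setminus B_\rho$ in $B_{R_n}\setminus B_R$. Pigeonholing gives a radius $\rho_n$ with $R\le\rho_n$, $4\rho_n\le R_n$ and annulus energy at most $\delta_n\to0$. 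Applying Lemma~\ref{cut-offconstruction} with $\rho_n$ produces $\m_n^{(1)}$ and $\m_n^{(2)}$. By the $H^2$ smallness estimate there, the energy of each piece in the transition region is $O(\delta_n)$. Together with the energy bounds of alternative~\ref{case:dichotomy} this gives
\begin{align*}
\E_H[\m_n^{(1)}]+\E_H[\m_n^{(2)}]\le \E_H[\m_n]+C\varepsilon+o(1).
\end{align*}
Each $\m_n^{(i)}$ has energy at least $\min(\lambda,1-\lambda)M'-C\varepsilon$, where $M'=\liminf_n\E_H[\m_n]=I_k(H)>0$. The energy density $e$ contains the negative term $-|\nabla\m|^2$ only implicitly; this is harmless because $e\ge0$ pointwise in the recentered form \eqref{eq:energyDensity}.

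\textbf{Step 2: splitting of the charge.} We need $Q[\m_n^{(1)}]+Q[\m_n^{(2)}]=Q[\m_n]$ for large $n$ and $\varepsilon$ small. Since all three maps are integer-valued, an error estimate of size $<1$ suffices. Let $\F_n$, $\A_n$ denote the curvature and Coulomb potential of $\m_n$, and $\F^{(i)}_n$, $\A_n^{(i)}$ those of the pieces. The curvatures agree with $\F_n$ off the transition annulus $T_n=B_{4\rho_n}\setminus B_{\rho_n}$, with the other piece's curvature vanishing there. On $T_n$ all curvatures are small in $L^1\cap L^2$, by $|\F|\le\frac12|\nabla\m|^2$ and the $H^2$ bounds. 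Hence $\F_n-\F_n^{(1)}-\F_n^{(2)}$ is small in $L^{6/5}$. The same holds for $\A_n-\A_n^{(1)}-\A_n^{(2)}$ in $L^6$, since $\A=\operatorname{curl}(-\Delta)^{-1}\F$ is linear and bounded $L^{6/5}\cap\dots\to L^6$ by the Sobolev estimate from Lemma~\ref{lemma:scaling_invariant_topological_bound}. We then expand
\begin{align*}
\int \F_n\cdot\A_n-\int\F_n^{(1)}\cdot\A^{(1)}_n-\int\F^{(2)}_n\cdot\A^{(2)}_n .
\end{align*}
The diagonal error terms are small by Hölder. The cross terms $\int\F^{(1)}_n\cdot\A^{(2)}_n$ are the mutual linking of two well-separated fields: $\F^{(1)}_n$ is supported in $B_{4\rho_n}$, while $\F^{(2)}_n$ carries its mass, up to $\varepsilon$, outside $B_{R_n}$. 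This is where the localized bound of Lemma~\ref{lemma:loc} enters, through its second inequality with $B$ the region where the other piece is small, giving a factor $\varepsilon^{1/6}M^{7/6}$. The remaining far-field part should be controlled by the decay of the Biot--Savart kernel. I expect this cross-term estimate to be the main obstacle. What I would try first is to choose $R_n\gg\rho_n$ along a further subsequence, so that the kernel decay $|x-y|^{-2}$ over the distance $R_n-4\rho_n$ kills the interaction. If that is too delicate, an alternative is to use that $\F^{(2)}$ vanishes on $B_{\rho_n}$ and exploit gauge freedom, taking $\A^{(2)}$ exact there.

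\textbf{Step 3: conclusion.} Set $\ell=Q[\m_n^{(1)}]$, which is constant along a subsequence because $|\ell|\lesssim M$ by Harland's bound \eqref{eq:Harland}. Then $Q[\m_n^{(2)}]=k-\ell$ and
\begin{align*}
I_\ell(H)+I_{k-\ell}(H)\le I_k(H)+C\varepsilon .
\end{align*}
A diagonal argument in $\varepsilon\to0$ works because $\ell$ ranges over a finite set, so some $\ell$ occurs for a sequence $\varepsilon\to0$; this gives $I_k(H)\ge I_\ell(H)+I_{k-\ell}(H)$. To exclude $\ell\in\{0,k\}$, suppose $\ell=0$. Then $\m_n^{(2)}$ has charge $k$ and energy at most $(1-\lambda)I_k(H)+C\varepsilon$, which is strictly less than $I_k(H)$ for $\varepsilon$ small. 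This contradicts the definition of $I_k(H)$, since $I_k(H)\ge C(H)|k|^{3/4}>0$. The case $\ell=k$ is symmetric, using the first piece and $\lambda<1$.
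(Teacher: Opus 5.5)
Your Steps~1 and~3 are sound and follow the paper's structure. Choosing $\rho_n$ by pigeonholing, rather than working with a fixed but enlargeable $R$, is a harmless variant. However, Step~2 stops at the estimate that carries the lemma. The mixed terms $\int\F_n^{(1)}\cdot\A_n^{(2)}\,\d x$ and $\int\F_n^{(2)}\cdot\A_n^{(1)}\,\d x$ are never bounded. Without them you do not get $Q[\m_n]=Q[\m_n^{(1)}]+Q[\m_n^{(2)}]$, so Step~3 has nothing to act on.

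The tool you name for this does not apply. The second inequality of Lemma~\ref{lemma:loc} controls $\int_B|\F\cdot\A|$ for the field and Coulomb potential of one and the same map. It therefore bounds $\int_B|\F_n^{(2)}\cdot\A_n^{(2)}|$, not the pairing of $\F_n^{(1)}$ with $\A_n^{(2)}$. No local energy bound can do the job here. On $B_{\rho_n}$ the field $\F_n^{(1)}$ is large, and although $\m_n^{(2)}\equiv\e{3}$ there, $\A_n^{(2)}$ need not be small. The smallness must come from the distance to the sources of $\A_n^{(2)}$.

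There is also a smaller slip. The Coulomb potential satisfies $\|\A\|_{L^6}\lesssim\|\F\|_{L^2}$; it is not bounded by an $L^{6/5}$ norm of $\F$. The $L^6$-smallness of $\A_n-\A_n^{(1)}-\A_n^{(2)}$ comes from the $L^2$-smallness of the difference field on the annulus, which does hold via $|\F|\le\frac12|\Delta\m|$.

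The missing estimate is short, and your option (a) points the right way, though neither a further subsequence nor $R_n\gg\rho_n$ is needed.
\begin{itemize}
\item Split $\F_n^{(1)}$ into its parts on $B_{\rho_n}$ and on $B_{4\rho_n}\setminus B_{\rho_n}$. The latter is small in $L^{6/5}$: interpolate its small $L^2$ norm with the $L^1$ bound $\lesssim M$. Pair it with $\|\A_n^{(2)}\|_{L^6}\lesssim M^{1/2}$.
\item For $x\in B_{\rho_n}$, write $\A_n^{(2)}(x)$ by Biot--Savart and split the source $\F_n^{(2)}$, which vanishes on $B_{\rho_n}$, into two parts.
\item The part on $B_{4\rho_n}\setminus B_{\rho_n}$ is $O(\delta_n^{1/2})$ in $L^2$. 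By Hardy--Littlewood--Sobolev it generates a potential that is small in $L^6$, to be paired with $\|\F_n^{(1)}\|_{L^{6/5}}\lesssim M^{5/6}$.
\item For the part outside $B_{4\rho_n}$, the separation $|x-y|\ge 3\rho_n\ge 3R$ gives the bound $cR^{-2}\|\F_n^{(1)}\|_{L^1}\|\F_n^{(2)}\|_{L^1}\lesssim M^2/R^2$.
\end{itemize}
The dichotomy radius may be enlarged within $(R,R_n)$; alternatively, pigeonhole only among annuli with $\rho\to\infty$. Either way the total error becomes less than $1$. This is exactly the paper's argument, and with it in place the rest of your proposal goes through.
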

\begin{proof}
Suppose that dichotomy~\ref{case:dichotomy} occurs for $(e(\m_n))_{n\in \N}$. Let $0 < \lambda < 1$ be the $\lambda$ described in dichotomy and fix $\varepsilon > 0$ to find $R, R_n, y_n$ as stated in~\ref{case:dichotomy}. Assume further $R>1$ and by translation invariance $y_n = 0$, then 
\begin{align*}
|\lambda\E_H[\m_n] - \E_H[\m_n;B_R]| < \varepsilon \text{ and } |(1-\lambda)\E_H[\m_n] - \E_H[\m_n;\R^3 \setminus B_{R_n}]| < \varepsilon.
\end{align*}
Without loss of generality $R_n > 4R$ since $R_n \to \infty$, then we conclude
\begin{align*}
&|(1-\lambda)\E_H[\m_n] - \E_H[\m_n;\R^3 \setminus B_{4R}] | < 3 \varepsilon \text{ and} \\
&\E_H[\m_n;B_{4R} \setminus B_R] < 2 \varepsilon.
\end{align*}
Both statements can be shown by showing that $\E_H[\m_n;B_{R_n} \setminus B_R] < 2\varepsilon$, a consequence of
\begin{align*}
\E_H[\m_n;B_{R_n} \setminus B_R] 
&\le |\E_H[\m_n;B_{R_n}] - \lambda \E_H[\m_n]| + |\lambda \E_H[\m_n] - \E_H[\m_n;B_R]|\\
&= |(1-\lambda)\E_H[\m_n] - \E_H[\m_n;\R^3 \setminus B_{R_n}] | \\
&+ |\lambda \E_H[\m_n] - \E_H[\m_n;B_R]|
\end{align*}
and~\ref{case:dichotomy} dichotomy. In particular, this shows that in the dichotomy case the radius $R$ can be replaced by a radius $R'\in(R,R_n)$, and the same conclusions remain valid. Since $R_n\to\infty$, we may therefore enlarge $R$ in the proof whenever needed. In particular, we assume $R\ge 1$. Then, by \eqref{eq:densityCoercivity}, we get that
\begin{align*}
\int_{B_{4R}\setminus B_R} |\m - \e 3|^2 + |\Delta \m|^2 \d x< c_0 \varepsilon.
\end{align*}
Let $\m^{(1)}_n,\m^{(2)}_n\in H^2(\R^3;\S^2)$ be the maps provided by Lemma~\ref{cut-offconstruction}, such that
\begin{align*}
&\m^{(1)}_n = \m_n \text{ on } B_R ,&& \m^{(2)}_n = \m_n \text{ on } \R^3 \setminus B_{4R}, \\
&\m^{(1)}_n = \e{3} \text{ on } \R^3 \setminus B_{4R}  , &&\m^{(2)}_n = \e{3} \text{ on } B_R.
\end{align*}
From the same Lemma and $\E_H[\m_n;B_{4R} \setminus B_R] < c\varepsilon$ we get that 
\begin{align*}
&\lVert \m^{(1)}_n - \e{3} \rVert_{H^2(\R^3 \setminus B_R)}^2 + \lVert \m^{(2)}_n - \e{3} \rVert_{H^2(B_{4R})}^2\\ 
&=  \lVert \m^{(1)}_n - \e{3} \rVert_{H^2(B_{4R} \setminus B_R)}^2 + \lVert \m^{(2)}_n - \e{3} \rVert_{H^2(B_{4R} \setminus B_R)}^2 < c \varepsilon . 
\end{align*}
This implies 
\begin{align*}
\E_H[\m^{(1)}_n;B_{4R} \setminus B_R] < c \varepsilon \text{ and } \E_H[\m^{(2)}_n;B_{4R} \setminus B_R] < c \varepsilon.
\end{align*}
Let $\F_n$ and $\F_n^{(j)}$ denote the fields corresponding to $\m_n$ and $\m_n^{(j)}$, respectively. Choose potentials in Coulomb gauges
\begin{align*}
\nabla\times \A_n=\F_n,
\qquad
\nabla\times \A_n^{(j)}=\F_n^{(j)}
\end{align*}
for $j=1,2$. Then
\begin{align*}
\nabla\times\bigl(\A_n-\A_n^{(1)}-\A_n^{(2)}\bigr)
=
\F_n-\F_n^{(1)}-\F_n^{(2)},
\qquad
\nabla\cdot\bigl(\A_n-\A_n^{(1)}-\A_n^{(2)}\bigr)=0.
\end{align*}
We compute
\begin{align*}
&16\pi^2
\left|Q[\m_n]-Q[\m_n^{(1)}]-Q[\m_n^{(2)}]\right|
\\
&\le
\left|
\int_{\R^3}
\bigl(\F_n-\F_n^{(1)}-\F_n^{(2)}\bigr)\cdot\A_n
\d x
\right|
+
\left|
\int_{\R^3}
\F_n^{(1)}\cdot
\bigl(\A_n-\A_n^{(1)}-\A_n^{(2)}\bigr)
\d x
\right|
\\
&+
\left|
\int_{\R^3}
\F_n^{(2)}\cdot
\bigl(\A_n-\A_n^{(1)}-\A_n^{(2)}\bigr)
\d x
\right|
+
\left|
\int_{\R^3}\F_n^{(1)}\cdot\A_n^{(2)}\d x
\right|
+
\left|
\int_{\R^3}\F_n^{(2)}\cdot\A_n^{(1)}\d x
\right|.
\end{align*}
Since $\F_n-\F_n^{(1)}-\F_n^{(2)}$ is supported in $B_{4R}\setminus B_R$, the first three terms are estimated by Hölder and Sobolev as
\begin{align*}
\left|
\int_{\R^3}
\bigl(\F_n-\F_n^{(1)}-\F_n^{(2)}\bigr)\cdot\A_n
\d x
\right|
&\le cM^{\frac12}\varepsilon^{\frac56},
\\
\left|
\int_{\R^3}
\F_n^{(j)}\cdot
\bigl(\A_n-\A_n^{(1)}-\A_n^{(2)}\bigr)
\d x
\right|
&\le cM^{\frac56}\varepsilon^{\frac12}
\end{align*}
for $j=1,2$. It remains to estimate the mixed terms. We only consider the first one. Since $\F_n^{(1)}=0$ outside $B_{4R}$,
\begin{align*}
\left|
\int_{\R^3}\F_n^{(1)}\cdot\A_n^{(2)}\d x
\right|
&\le
\left|
\int_{B_R}\F_n^{(1)}\cdot\A_n^{(2)}\d x
\right|
+
\left|
\int_{B_{4R}\setminus B_R}\F_n^{(1)}\cdot\A_n^{(2)}\d x
\right|
\\
&\le
\left|
\int_{B_R}\F_n^{(1)}\cdot\A_n^{(2)}\d x
\right|
+
cM^{\frac12}\varepsilon^{\frac56}.
\end{align*}
For $x\in B_R$ we use the Biot--Savart representation only for the contribution of
$\F_n^{(2)}$ outside $B_R$. The part coming from $B_{4R}\setminus B_R$ gives
$cM^{\frac56}\varepsilon^{\frac12}$. For the part coming from
$\R^3\setminus B_{4R}$ we use $|x-y|\ge 3R$ and obtain
\begin{align*}
\left|
\int_{B_R}\F_n^{(1)}\cdot\A_n^{(2)}\d x
\right|
&\le
cM^{\frac56}\varepsilon^{\frac12}
+
\frac{c}{R^2}
\|\F_n^{(1)}\|_{L^1(\R^3)}
\|\F_n^{(2)}\|_{L^1(\R^3)}
\\
&\le
cM^{\frac56}\varepsilon^{\frac12}
+
\frac{cM^2}{R^2}.
\end{align*}
The same estimate holds for
$\int_{\R^3}\F_n^{(2)}\cdot\A_n^{(1)}\d x$. Hence
\begin{align*}
\left|Q[\m_n]-Q[\m_n^{(1)}]-Q[\m_n^{(2)}]\right|
\le
cM^{\frac12}\varepsilon^{\frac56}
+
cM^{\frac56}\varepsilon^{\frac12}
+
\frac{cM^2}{R^2}.
\end{align*}
Choosing $\varepsilon>0$ small and $R$ large, the right-hand side is smaller than $1$. Since all three Hopf invariants are integers, we obtain
\begin{align}\label{eq:chargesplit}
Q[\m_n]=Q[\m_n^{(1)}]+Q[\m_n^{(2)}].
\end{align}
Note that we can choose $\varepsilon$ as stated, because the constant $c$ is independent of $n, R$ and $R_n$. Observe further that 
\begin{align*}
|Q[\m^{(1)}_n]| \le c \E_H[\m^{(1)}_n]^{\frac43} \le c(\E_H[\m_n] + \E_H[\m^{(1)}_n;B_{4R} \setminus B_R])^{\frac43} \le c(M+\varepsilon)^{\frac43}.
\end{align*}
After passing to a further subsequence, we conclude $Q[\m^{(1)}_n]$ is constant. Denoting this constant by $\ell$, \eqref{eq:chargesplit} implies $Q[\m_n^{(2)}]=k-\ell$. If $\ell=k$, then $I_k(H) \le \E_H[\m^{(1)}_n]$ for every $n \in \N$. In particular
\begin{align*}
I_k(H) &\le \E_H[\m^{(1)}_n] = \E_H[\m_n;B_R] + \E_H[\m^{(1)}_n; B_{4R} \setminus B_R]\\
&\le |\lambda \E_H[\m_n] - \E_H[\m_n;B_R]| + \lambda \E_H[\m_n] + \E_H[\m^{(1)}_n; B_{4R} \setminus B_R]\\
&\le \lambda \E_H[\m_n]  + c \varepsilon \to \lambda I_k(H) + c \varepsilon.
\end{align*}
Choosing $\varepsilon < \frac{1-\lambda}{c}I_k(H)$ contradicts this case. If $\ell=0$ then $Q[\m^{(2)}_n]=k$, hence we would derive a similar contradiction. Therefore $\ell \in \Z \setminus \{ 0 , k \}$ and we compute (for $n \in \N$ large enough)
\begin{align*}
\E_H[\m_n] &= \lambda \E_H[\m_n]  + (1 - \lambda ) \E_H[\m_n] \\
&\geq \E_H[\m_n;B_R] + \E_H[\m_n ;\R^3 \setminus B_{4R}] - c \varepsilon \\
&= \E_H[\m^{(1)}_n] + \E_H[\m^{(2)}_n] -\E_H[\m^{(1)}_n;B_{4R}\setminus B_{R}] - \E_H[\m^{(2)}_n;B_{4R}\setminus B_{R}]-   c \varepsilon  \\
&\geq \E_H[\m^{(1)}_n] + \E_H[\m^{(2)}_n] - c \varepsilon\\
&\geq I_\ell(H) + I_{k-\ell}(H) - c \varepsilon\\
\end{align*}
and with $n \to \infty$
\begin{align*}
I_k(H) \geq I_\ell(H) + I_{k-\ell}(H) - c \varepsilon.
\end{align*}
Although $\ell$ may depend on $\eps$, it is integer-valued and uniformly bounded. Thus, along a sequence $\eps_j\to 0$, we may assume that $\ell$ is constant. Letting $j\to\infty$ proves the claim.
\end{proof}	

\begin{prop}\label{prop:existence}
Let $H>\frac14$ and $k \in \Z$. If for all $\ell \in \Z \setminus \{ 0,k \}$ 
\begin{align*}
I_k(H) < I_\ell(H) + I_{k-\ell}(H),
\end{align*}
then $I_k(H)$ is attained.
\end{prop}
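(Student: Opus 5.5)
We take a minimizing sequence $(\m_n)$ for \eqref{Ek} with $\E_H[\m_n]\to I_k(H)$ and apply the concentration-compactness trichotomy to the densities $e(\m_n)$. The case $k=0$ is trivial, since $\e{3}$ attains $I_0(H)=0$, so let $k\neq0$. Then vanishing is excluded by Lemma~\ref{lemma:vanishing}. If dichotomy occurred, Lemma~\ref{lemma:dichotomy} would give some $\ell\in\Z\setminus\{0,k\}$ with $I_k(H)\ge I_\ell(H)+I_{k-\ell}(H)$, contradicting the strict subadditivity hypothesis. Hence compactness holds: there are $y_n$ such that for every $\eps>0$ there is $R$ with $\int_{\R^3\setminus B_R(y_n)}e(\m_n)\d x\le\eps$ for $n$ large. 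Replacing $\m_n$ by $\m_n(\cdot+y_n)$, which changes neither energy nor charge, we may take $y_n=0$.

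By \eqref{eq:densityCoercivity}, $\m_n-\e{3}$ is bounded in $H^2(\R^3;\R^3)$, since $\|\nabla^2 u\|_{L^2}=\|\Delta u\|_{L^2}$ on $\R^3$. Passing to a subsequence, $\m_n-\e{3}\rightharpoonup\m-\e{3}$ weakly in $H^2$ and strongly in $H^1_{\loc}$, and pointwise a.e., so $|\m|=1$ and $\m\in H^2(\R^3;\S^2)$. Tightness upgrades local to global convergence: $\|\m_n-\m\|_{L^2(\R^3\setminus B_R)}^2\lesssim\eps$ uniformly, hence $\m_n\to\m$ in $L^2(\R^3)$. Interpolating between $L^2$ and the bounded $\dot H^2$ gives strong convergence in $H^1(\R^3)$. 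For lower semicontinuity, write $\E_H$ in the form \eqref{eq:energyDensity}. The map $\m\mapsto\Delta\m+\tfrac12(\m-\e{3})$ converges weakly in $L^2$, and $\m_n-\e{3}\to\m-\e{3}$ strongly in $L^2$. Weak lower semicontinuity of the $L^2$ norm therefore yields $\E_H[\m]\le\liminf\E_H[\m_n]=I_k(H)$.

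The main step is to show $Q[\m]=k$, since the charge is not weakly continuous in general. Strong $H^1$ convergence together with the $H^2$ bound gives strong convergence of $\nabla\m_n$ in $L^p$ for $2\le p<6$ by Gagliardo--Nirenberg; in particular it converges in $L^4$ and $L^{8/3}$. Since $\F$ is quadratic in $\nabla\m$ with one factor of $\m$, this gives $\F_n\to\F$ in $L^2\cap L^{6/5}$ ($L^{6/5}\subset L^1\cap L^2$ interpolation, where $\F_n\to\F$ in $L^1$ follows from $\nabla\m_n\to\nabla\m$ in $L^2$). The Coulomb potentials then satisfy $\A_n\to\A$ in $L^6$ by the Sobolev bound $\|\A\|_{L^6}\lesssim\|\F\|_{L^2}$ from the proof of Lemma~\ref{lemma:scaling_invariant_topological_bound}. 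Consequently
\begin{align*}
\int_{\R^3}\F_n\cdot\A_n\d x\to\int_{\R^3}\F\cdot\A\d x,
\end{align*}
by the Hölder pairing $L^{6/5}\times L^6$. Thus $Q[\m]=\lim Q[\m_n]=k$, using integrality.

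It follows that $\m$ is admissible for \eqref{Ek}, so $I_k(H)\le\E_H[\m]\le I_k(H)$, and $I_k(H)$ is attained. The point needing most care is the global (not merely local) strong $H^1$ convergence. This is exactly where tightness from the compactness alternative is used, since without it a part of the charge could escape to infinity.
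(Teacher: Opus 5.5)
Your proof is correct and follows the paper's overall structure. You exclude vanishing and dichotomy via Lemma~\ref{lemma:vanishing} and Lemma~\ref{lemma:dichotomy}, translate to the origin, take a weak $H^2$ limit, use tightness to obtain strong $H^1(\R^3)$ convergence, and conclude by lower semicontinuity. On that last point you are more explicit than the paper about the negative term $-|\nabla\m|^2$, which you handle through the recentered density \eqref{eq:energyDensity}.

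The genuine difference is the charge step. The paper interpolates between the strong $H^1$ convergence and the $H^2$ bound to get $\m_n\to\m$ in $H^s$ for some $s\in(\tfrac32,2)$, hence uniformly. It then uses that uniformly close maps into $\S^2$ are homotopic. You instead prove that the Whitehead integral itself is continuous along the sequence: $\F_n\to\F$ in $L^1\cap L^2\subset L^{6/5}$, and $\A_n\to\A$ in $L^6$ by linearity of the Coulomb gauge and the Sobolev bound.

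The paper's route is shorter and yields uniform convergence, which is reused in Theorem~\ref{thm:profile}. However, it tacitly identifies the Whitehead integral on $H^2(\R^3;\S^2)$ with the homotopy invariant of the continuous extension to $\S^3$. That identification is normally justified by density of smooth maps together with exactly the kind of continuity you prove. Your route stays with the integral definition of $Q$. It also does not need integrality: since $Q[\m_n]=k$ for every $n$, you get $Q[\m]=\lim_n Q[\m_n]=k$ directly.

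One detail you should spell out is that $\F$ contains the undifferentiated factor $\m$. So $\F_n-\F$ includes the term $(\m_n-\m)\cdot(\partial_i\m_n\times\partial_j\m_n)$. This tends to zero in $L^1\cap L^2$ for two reasons. First, $|\m_n-\m|\le 2$ together with $L^2$ convergence gives $\m_n\to\m$ in every $L^q$ with $2\le q<\infty$. Second, $\nabla\m_n$ is bounded in $L^p$ for $2\le p\le 6$. Alternatively, you can borrow the uniform convergence from the paper's argument.
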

\begin{proof}
Let $(\m_n)_{n \in \N} \subset H^2(\R^3;\S^2)$ be a minimizing sequence and assume without loss of generality $k \neq 0$. Using Lemma~\ref{lemma:vanishing} and Lemma~\ref{lemma:dichotomy}, $(e(\m_n))_{n \in \N}$, up to some subsequence, can only satisfy the compactness condition~\ref{case:compactness}. By translation invariance of $\mathcal E$ and $Q$ we may assume the concentrating centers are at the origin, i.e. for every $\varepsilon>0$ there exists $R>0$ such that
\begin{align} \label{eq:tightness}
\int_{\mathbb R^3\setminus B_R} e(\m_n)\d x \le \varepsilon
\quad \text{for all large } n .
\end{align}
The coercivity estimate \eqref{eq:densityCoercivity}, together with the elliptic estimate on $\R^3$, gives
\begin{align*}
\|\m_n - \e{3}\|_{H^2(\mathbb R^3)}^2 \le C \E_H[\m_n] \le C M .
\end{align*}
Since $H^2$ is reflexive there is a map $\m:\R^3 \to \R^3$ such that $\m-\e 3\in H^2(\mathbb R^3;\mathbb R^3)$ with
\begin{align*}
\m_n - \e{3} \rightharpoonup \m - \e{3} \quad \text{in } H^2(\mathbb R^3) .
\end{align*}
By Rellich on bounded sets together with \eqref{eq:tightness}, we also have
\begin{align*}
\m_n - \e{3} \to \m - \e{3} \quad \text{in } H^1(\mathbb R^3).
\end{align*}
and hence, up to a subsequence,
\begin{align*}
\m_n \to \m \quad \text{and} \quad \nabla \m_n \to \nabla \m \quad \text{almost everywhere in } \mathbb R^3 .
\end{align*}
In particular $\m(x)\in \mathbb S^2$ almost everywhere, hence $\m\in H^2(\mathbb R^3;\mathbb S^2)$. Interpolating the strong $H^1$ convergence with the weak $H^2$ convergence also yields
\begin{align*}
    \|\m_n - \m\|_{H^s(\R^3)} \le\|\m_n - \m\|_{H^1(\R^3)}^{2-s}  \|\m_n - \m\|_{H^2(\R^3)}^{s-1}\to 0
\end{align*}
as $n\to \infty$ for every $1<s<2$. Furthermore, for $s>\frac32$, we conclude uniform convergence, hence $Q[\m]=Q[\m_n]=k$ for large $n \in \N$. Lastly, the energy is weakly lower semicontinuous
\begin{align*}
\E_H[\m] \le \liminf_{n\to\infty} \E_H[\m_n] = I_k(H) .
\end{align*}
\end{proof}
\begin{cor}\label{cor:minimalenergy}
Let $H>\frac14$. There exists a minimal energy level, that is $k_0=k_0(H)\in\Z\setminus\{0\}$ with
\begin{align*}
I_{k_{0}}(H)=\inf\{I_k(H):k\in\Z\setminus\{0\}\},
\end{align*}
and this infimum is attained.
\end{cor}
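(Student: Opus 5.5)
To prove Corollary~\ref{cor:minimalenergy}, I first show that the infimum $I_\ast:=\inf\{I_k(H):k\in\Z\setminus\{0\}\}$ is achieved by some $k_0$. Then I check that this $k_0$ satisfies the strict subadditivity hypothesis of Proposition~\ref{prop:existence}.

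\emph{Step 1: $I_\ast$ is positive and attained among finitely many charges.} By Harland's bound \eqref{eq:Harland}, $I_k(H)\ge C(H)|k|^{\frac34}\ge C(H)>0$ for every $k\ne 0$, since $C(H)>0$ for $H>\tfrac14$ (indeed $\sqrt{1+12H}>2$). Hence $I_\ast\ge C(H)>0$. By Lemma~\ref{lemma:energygrowth}, $I_\ast\le I_1(H)<\infty$. Moreover, $I_k(H)\ge C(H)|k|^{\frac34}>I_1(H)$ as soon as $|k|$ exceeds a threshold $K(H)$. The infimum therefore reduces to a minimum over the finite set $\{1\le|k|\le K(H)\}$. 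It is attained at some $k_0$, which by \eqref{eq:reflection} we may take positive.

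\emph{Step 2: strict subadditivity at $k_0$.} Let $\ell\in\Z\setminus\{0,k_0\}$. Then both $\ell$ and $k_0-\ell$ are nonzero, so by minimality
\begin{align*}
I_\ell(H)+I_{k_0-\ell}(H)\ \ge\ 2I_\ast\ >\ I_\ast = I_{k_0}(H),
\end{align*}
where the strict inequality uses $I_\ast>0$ from Step~1. This is exactly the hypothesis of Proposition~\ref{prop:existence}, which gives that $I_{k_0}(H)$ is attained.

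The only point needing care is the positivity of the lower bound, which guarantees both $I_\ast>0$ and coercivity in $k$. This is immediate from the explicit constant $C(H)$ under the standing assumption $H>\tfrac14$. Finiteness of each $I_k$ also needs the competitor from Lemma~\ref{lemma:energygrowth}, so that the minimum in Step~1 is over finite values.
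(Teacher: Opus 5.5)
Your proof is correct and follows the paper's argument: Harland's bound \eqref{eq:Harland} reduces the infimum to finitely many charges, and minimality together with $I_{k_0}(H)>0$ gives $I_{k_0}(H)<2I_{k_0}(H)\le I_\ell(H)+I_{k_0-\ell}(H)$, so Proposition~\ref{prop:existence} applies. Your explicit checks that $C(H)>0$ and $I_1(H)<\infty$ only spell out what the paper uses implicitly.
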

\begin{proof}
Choose $ k_1\in\mathbb{N}$ with $I_1(H)<C(H) k_1^{\frac34}$. Then for all $|\ell|> k_1$,
\begin{align*}
I_\ell(H)\ge C (H)|\ell|^{\frac34}>I_1(H).
\end{align*}
Hence
\begin{align*}
\inf\limits_{k \in \Z \setminus \{ 0 \} } I_k(H) = \inf\limits_{k \in \{ -k_1, \dots , k_1 \} \setminus \{ 0 \} }  I_k(H).
\end{align*}
so there is $k_{0}\in\mathbb{Z}\setminus\{0\}$ with $I_{k_{0}}(H)=\inf_{k\ne0}I_k(H)$. Therefore, for $\ell \in \Z \setminus \{0,k_0\}$, we get
\begin{align*}
I_{k_0}(H) < 2 I_{k_0}(H) \le I_\ell(H) +I_{k_0-\ell}(H).
\end{align*}
With Proposition~\ref{prop:existence} we obtain the claim.
\end{proof}

\subsection{Attainment in infinitely many homotopy classes}

Define
\begin{align} \label{eq:SigmaH}
\Sigma_H = \{ k \in \Z\setminus \{0\} : I_k(H) <I_{\ell}(H) + I_{k-\ell}(H) \text{ for every } \ell \in \Z\setminus \{0,k\} \}.
\end{align}

\begin{lemma}\label{EnDec}
Let $H>\frac14$. For every $k \in \Z\setminus\{0\}$ there exist $J \in \N$ and $d_{1},\dots,d_{J} \in \Sigma_H$ with $k=\sum_{i=1}^{J} d_{i}$ such that
\begin{align*}
I_k(H) \ge \sum_{i=1}^{J} I_{d_i}(H).
\end{align*}
\end{lemma}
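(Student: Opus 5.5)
The plan is a recursive splitting argument, terminated by a size bound that comes from the topological lower bound \eqref{eq:Harland}.

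Fix $H>\frac14$ and $k\neq 0$. If $k\in\Sigma_H$, we take $J=1$ and $d_1=k$. Otherwise, by the definition \eqref{eq:SigmaH} there exists $\ell\in\Z\setminus\{0,k\}$ with
\begin{align*}
I_k(H)\ge I_\ell(H)+I_{k-\ell}(H).
\end{align*}
We then apply the same reasoning to each of the two charges $\ell$ and $k-\ell$ and concatenate the resulting decompositions. Summing the inequalities keeps the form $I_k(H)\ge\sum_i I_{d_i}(H)$ with $\sum_i d_i=k$. What remains is to show that this process stops, i.e.\ that one cannot keep splitting indefinitely.

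The main obstacle is termination, because the split charges need not decrease in absolute value: $\ell$ may have the opposite sign to $k$ and $|\ell|>|k|$. I would control the process through the \emph{number of pieces}, not their size. Since each $I_d(H)\ge C(H)|d|^{\frac34}>0$ for $d\neq0$, any admissible decomposition into $J$ nonzero charges satisfies
\begin{align*}
I_k(H)\ge \sum_{i=1}^J I_{d_i}(H)\ge J\, C(H)\min_{d\ne 0}|d|^{\frac34}= J\,C(H).
\end{align*}
Hence $J\le I_k(H)/C(H)$, which is finite by Lemma~\ref{lemma:energygrowth}. The same bound gives $|d_i|\le (I_k(H)/C(H))^{\frac43}$, so only finitely many charges can occur at all.

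To make this rigorous I would argue by maximality rather than by a recursion tree. Consider the set $\mathcal D$ of finite tuples $(d_1,\dots,d_J)$ of nonzero integers with $\sum_i d_i=k$ and $I_k(H)\ge\sum_i I_{d_i}(H)$. This set contains $(k)$, so it is nonempty, and the bound above gives $J\le I_k(H)/C(H)$ for every tuple in it. Choose a tuple in $\mathcal D$ of maximal length $J$. Suppose some entry $d_i\notin\Sigma_H$. Then there is $\ell\notin\{0,d_i\}$ with $I_{d_i}(H)\ge I_\ell(H)+I_{d_i-\ell}(H)$. Replacing $d_i$ by the two nonzero charges $\ell$ and $d_i-\ell$ produces a tuple that still sums to $k$ and still lies in $\mathcal D$, but has length $J+1$. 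This contradicts maximality, so every $d_i\in\Sigma_H$, which is exactly the statement of the lemma.
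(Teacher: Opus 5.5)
Your proof is correct and follows the paper's approach. The paper likewise splits off charges not in $\Sigma_H$ one at a time, and proves termination with the same piece-count bound $J \le I_k(H)/C(H)$, using the topological lower bound together with Lemma~\ref{lemma:energygrowth}. Your maximal-length argument over tuples in $\mathcal D$ is simply a cleaner way to formalize that termination step.
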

\begin{proof}
Fix $k \in \Z$. If $k \in \Sigma_H$ the statement is immediate with $J=1$ and $d_{1}^{(1)}=k$. Assume $k \notin \Sigma_H$. By \eqref{eq:SigmaH} there exist nonzero integers $d^{(2)}_{1},d^{(2)}_{2}$ with
\begin{align*}
k = d^{(2)}_{1} + d^{(2)}_{2}, \qquad I_k(H) \ge I_{d^{(2)}_{1}}(H) + I_{d^{(2)}_{2}}(H).
\end{align*}
If both $d^{(2)}_{1}$ and $d^{(2)}_{2}$ lie in $\Sigma_H$, the proof is complete. Otherwise, choose the smallest $i \in \{1,2\}$ with $d^{(2)}_{i}\notin \Sigma_H$. We then apply \eqref{eq:SigmaH} again to decompose $d^{(2)}_{i}$ into two nonzero integers. After $J$ refinement steps this yields a finite family of nonzero integers
\begin{align*}
\left( d^{(J)}_{i} \right)_{1 \le i \le J}
\end{align*}
such that
\begin{align*}
k = \sum_{i=1}^{J} d^{(J)}_{i}, \qquad
I_k(H) \ge  \sum_{i=1}^{J} I_{d^{(J)}_i}(H).
\end{align*}
Using
\begin{align*}
I_d(H) \ge C(H)|d|^{\frac34} \ge C(H) \quad \text{for every nonzero } d \in \Z,
\end{align*}
we infer
\begin{align*}
I_k(H) \ge JC(H).
\end{align*}
By Lemma~\ref{lemma:energygrowth}, $I_k(H) \le C|k|^{\frac34}$, which implies
\begin{align*}
J \le \frac{I_k(H)}{C(H)}  \le \frac{C}{C(H)}|k|^{\frac34}.
\end{align*}
Thus the refinement terminates after finitely many steps, which proves the claim.
\end{proof}
\begin{prop}\label{prop:SigmaHunbound}
For every $H>\frac14$, there exist infinitely many $k \in\Sigma_H$.
\end{prop}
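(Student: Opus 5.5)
We argue by contradiction, following Lin and Yang \cite{LinYang2004}. Suppose $\Sigma_H$ is finite, say $\Sigma_H\subset\{-N,\dots,N\}$ for some $N\in\N$. We will play the linear lower bound that Lemma~\ref{EnDec} forces on $I_k(H)$ against the sublinear upper bound of Lemma~\ref{lemma:energygrowth}.

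The first step is a linear lower bound. Set
\begin{align*}
\mu := \min\Big\{ \frac{I_d(H)}{|d|} : d\in\Sigma_H \Big\}.
\end{align*}
This is a minimum over a finite set, and it is strictly positive, since $I_d(H)\ge C(H)|d|^{\frac34}>0$ by \eqref{eq:Harland}. Note that $\Sigma_H\neq\emptyset$, because Lemma~\ref{EnDec} applied to $k=1$ produces at least one element of $\Sigma_H$.

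Now fix $k\in\N$. Lemma~\ref{EnDec} gives $d_1,\dots,d_J\in\Sigma_H$ with $\sum_i d_i=k$ and $I_k(H)\ge\sum_i I_{d_i}(H)$. Since each $d_i$ lies in $\Sigma_H$, we have $I_{d_i}(H)\ge \mu|d_i|$. Combining this with the triangle inequality,
\begin{align*}
I_k(H)\;\ge\;\mu\sum_{i=1}^J|d_i|\;\ge\;\mu\Big|\sum_{i=1}^J d_i\Big| \;=\;\mu k .
\end{align*}
The triangle inequality is what lets us handle decompositions containing negative charges: in that case $\sum_i|d_i|$ only exceeds $k$, so the bound still holds.

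The second step is the contradiction. Lemma~\ref{lemma:energygrowth} gives $I_k(H)\le CH^{\frac14}k^{\frac34}$. Together with the first step this yields
\begin{align*}
\mu k\le CH^{\frac14}k^{\frac34}\quad\text{for all } k\in\N,
\end{align*}
that is, $k^{\frac14}\le CH^{\frac14}/\mu$. This fails for $k$ large, so $\Sigma_H$ must be infinite.

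The argument is short once Lemma~\ref{EnDec} and Lemma~\ref{lemma:energygrowth} are available. The only points that need care are the positivity of $\mu$, which rests on the lower bound \eqref{eq:Harland}, and the treatment of signs in the decomposition, which the triangle inequality settles.

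The real analytic content lies in Lemma~\ref{lemma:energygrowth}. There, the upper bound must grow strictly sublinearly in $k$, with the same exponent $\frac34$ as the lower bound \eqref{eq:Harland}; this sublinearity is exactly what produces the contradiction above. That lemma is assumed here.

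Finally, the argument gives slightly more: for each $N$ there is some $k\in\Sigma_H$ with $|k|>N$. One could also check whether the reflection symmetry \eqref{eq:reflection} places infinitely many positive charges in $\Sigma_H$, but the statement does not require this.
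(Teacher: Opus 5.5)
Your proof is correct and follows essentially the paper's route: if $\Sigma_H$ is finite, Lemma~\ref{EnDec} forces a linear lower bound $I_k(H)\gtrsim |k|$, which contradicts the $|k|^{3/4}$ upper bound of Lemma~\ref{lemma:energygrowth}. The paper gets the linear constant as $C(H)/K$ with $K=\max_{d\in\Sigma_H}|d|$, using $I_{d_i}(H)\ge C(H)$ and $|d_i|\le K$, rather than your $\mu=\min_{d\in\Sigma_H} I_d(H)/|d|$; this difference is cosmetic, and your check that $\Sigma_H\neq\emptyset$ makes explicit a point the paper leaves implicit.
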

\begin{proof}
Fix $H>\frac14$ and assume for contradiction that $|\Sigma_H|<\infty$. Set
\begin{align*}
K = \max_{d \in \Sigma_H} |d|.
\end{align*}
Let $k \in \Z$. By Lemma~\ref{EnDec} there exist $J \in \N$ and $d_1,\dots,d_J \in \Sigma_H$ with
\begin{align*}
k=\sum_{i=1}^{J} d_{i}, \qquad I_k(H) \ge \sum_{i=1}^{J} I_{d_i}(H).
\end{align*}
The topological lower bound implies $I_{d_i}(H) \ge C(H)$ for every $1 \le i\le J$, hence
\begin{align*}
I_k(H) \ge JC(H).
\end{align*}
On the other hand,
\begin{align*}
|k| \le \sum_{i=1}^{J} |d_{i}| \le K J \le \frac{K}{C(H)} I_k(H).
\end{align*}
Using Lemma~\ref{lemma:energygrowth}, $I_k(H) \le C|k|^{\frac34}$, we obtain
\begin{align*}
|k| \le \frac{K C}{C(H)}|k|^{\frac34}.
\end{align*}
This gives a contradiction for large $|k|$. Therefore $|\Sigma_H|=\infty$.
\end{proof}
\section{The large-field regime}

\begin{lemma}\label{lemma:scalinglimit}
Let $\m\in H^2(\R^3;\S^2)$. Then
\begin{align}
    \label{eq:scalingLimitFormula}
    \lim_{H \to \infty} \frac{1}{C(H)} \inf_{\lambda >0} \E_H[ \m^{\lambda}] = \frac{1}{2^4 3^{\frac38}\pi^2}\| \m -\e{3} \|_{L^2}^{\frac12}\| \Delta \m \|_{L^2}^{\frac32}.
\end{align}
\end{lemma}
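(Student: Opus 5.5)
The plan is to combine the exact scaling formula of Lemma~\ref{lemma:scaling} with the explicit form of $C(H)$ and compare the two $H$-dependent prefactors as $H\to\infty$. If $\m=\e{3}$, both sides of \eqref{eq:scalingLimitFormula} vanish, so we may assume $\m\neq\e{3}$. Lemma~\ref{lemma:scaling} provides a $\theta=\theta(\m)\in[0,1]$. This $\theta$ is the ratio $\|\nabla\m\|_{L^2}^2/(\|\m-\e{3}\|_{L^2}\|\Delta\m\|_{L^2})$, so it depends only on $\m$ and not on $H$. With it,
\begin{align*}
\inf_{\lambda>0}\E_H[\m^\lambda]=\frac{\sqrt2}{3}\,g_\theta(H)\,\|\m-\e{3}\|_{L^2}^{\frac12}\|\Delta\m\|_{L^2}^{\frac32},
\qquad
g_\theta(H)=\frac{\sqrt{\theta^2+12H}-2\theta}{\sqrt{\sqrt{\theta^2+12H}-\theta}} .
\end{align*}
Similarly, $C(H)=\frac{2^{9/2}\pi^2}{3^{5/8}}\,g_1(H)$. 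It therefore suffices to show that $g_\theta(H)/g_1(H)\to 1$ as $H\to\infty$, uniformly in $\theta\in[0,1]$; uniformity is not even needed since $\theta$ is fixed.

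For the asymptotics, set $s=\sqrt{\theta^2+12H}$. Then $s\sim\sqrt{12H}$, and
\begin{align*}
g_\theta(H)=\sqrt{s}\,\frac{1-2\theta/s}{\sqrt{1-\theta/s}} = (12H)^{\frac14}\bigl(1+O(H^{-\frac12})\bigr),
\end{align*}
with the $O$-constant uniform for $\theta\in[0,1]$. The same expansion holds for $g_1$, which gives $g_\theta(H)/g_1(H)\to1$. Alternatively, one can observe that Lemma~\ref{lemma:scaling} already sandwiches $g_\theta$ between $g_1$ and $(12H)^{1/4}$, and that $g_1(H)/(12H)^{1/4}\to1$. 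This squeeze argument avoids tracking $\theta$ altogether, and it is the cleaner route I would use.

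It remains to compute the constant. Dividing gives
\begin{align*}
\frac{1}{C(H)}\inf_{\lambda>0}\E_H[\m^\lambda]\to \frac{\sqrt2}{3}\cdot\frac{3^{5/8}}{2^{9/2}\pi^2}\,\|\m-\e{3}\|_{L^2}^{\frac12}\|\Delta\m\|_{L^2}^{\frac32}.
\end{align*}
The prefactor simplifies as $\frac{\sqrt2\,3^{5/8}}{3\cdot 2^{9/2}\pi^2}=\frac{1}{2^4\,3^{3/8}\pi^2}$, which matches \eqref{eq:scalingLimitFormula}.

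There is no real obstacle here. The only points needing care are the case $\m=\e{3}$ and the bookkeeping of the numerical constants.
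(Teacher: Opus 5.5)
Your proposal is correct and is essentially the paper's proof. Both combine the exact scaling formula of Lemma~\ref{lemma:scaling} with the asymptotics $(12H)^{1/4}(1+o(1))$ for the $H$-dependent prefactor and for $C(H)$, and then match the constants exactly as you do; your squeeze variant, which uses the two-sided bound in Lemma~\ref{lemma:scaling} together with $g_1(H)/(12H)^{1/4}\to 1$, is just a tidier packaging of the same computation.
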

\begin{proof}
The claim follows by using Lemma~\ref{lemma:scaling}
\begin{align*}
\inf_{\lambda>0}\E_H[\m^{\lambda}] = \frac{\sqrt{2}}{3} (12 H)^{\frac14} \|\m-\e{3}\|_{L^2}^{\frac12}
\|\Delta\m\|_{L^2}^{\frac32}(1+o_H(1))
\end{align*}
and
\begin{align*}
C(H) =  \frac{2^{\frac{9}{2}} \pi^2}{3^{\frac58}}(12H)^{\frac14}(1+o_H(1))
\end{align*}
as $H \to \infty$. 
\end{proof}

\subsection{An explicit unit-charge competitor}

To construct explicit representatives in a fixed homotopy class, we use the standard factorization through the Hopf fibration. Namely, we choose a map
\begin{align*}
    \Phi:\R^3\cup\{\infty\}\simeq \S^3 \to \S^3
\end{align*}
and define
\begin{align*}
    \m
    =
    \pi_{\mathrm{Hopf}}\circ\Phi .
\end{align*}
With our normalization of the Hopf fibration
\begin{align*}
    Q[\m]
    =
    \deg\Phi .
\end{align*}
In particular, a degree-one choice of $\Phi$ gives a Hopf configuration with
\begin{align*}
    Q[\m]=1.
\end{align*}
We use the Hopf projection
\begin{align*}
    \pi_{\mathrm{Hopf}}:\S^3\subset\C^2\to\S^2
\end{align*}
given by
\begin{align*}
    \pi_{\mathrm{Hopf}}(Z_1,Z_2)
    =
    \left(
    2 \Re(Z_1\overline{Z_2}),
    2\Im(Z_1\overline{Z_2}),
    |Z_2|^2-|Z_1|^2
    \right).
\end{align*}
We combine this projection with a spinor ansatz (cf. e.g. \cite{Sutcliff2007Knots})
\begin{align*}
    Z_1(x)
    &=
    \frac{x_1+ix_2}{r}\sin f(r),
    \\
    Z_2(x)
    &=
    \frac{x_3}{r}\sin f(r)
    +
    i\cos f(r),
    \qquad
    r=|x|.
\end{align*}
Assuming $f(0),f(\infty)\in\pi\Z$, we conclude $Z=(Z_1,Z_2)$ extends continuously to
\begin{align*}
    Z:\R^3\cup\{\infty\}\simeq\S^3\to\S^3.
\end{align*}
The corresponding map into the sphere is
\begin{align*}
\m(x) =\pi_{\mathrm{Hopf}}(Z_1(x),Z_2(x)).
\end{align*}
In the canonical identification $\C^2\simeq\R^4$, we have
\begin{align*}
\det \left( Z, D_{(r,\theta,\varphi)} Z \right)=-f'(r)\sin^2 f(r)\sin\theta.
\end{align*}
Thus
\begin{align*}
\deg Z = \frac{2}{\pi} \int_0^\infty -f'(r)\sin^2 f(r)\,\mathrm{d}r = \frac{f(0)-f(\infty)}{\pi}.
\end{align*}
In particular,
\begin{align*}
f(0)=\pi, \qquad f(\infty)=0
\end{align*}
gives
\begin{align*}
    Q[\m]=\deg Z=1.
\end{align*}
Writing
\begin{align*}
    f(r)=2\arctan h(r)
\end{align*}
gives
\begin{align*}
    \sin f(r) = \frac{2h(r)}{1+h(r)^2}, \qquad \cos f(r) = \frac{1-h(r)^2}{1+h(r)^2}.
\end{align*}
The unit Hopf boundary conditions become
\begin{align*}
    h(0)=\infty,
    \qquad
    h(\infty)=0.
\end{align*}
Indeed, these conditions imply
\begin{align*}
    f(0)=\pi,
    \qquad
    f(\infty)=0,
\end{align*}
and hence $\deg Z=1.$

The standard Hopf map is obtained from
\begin{align*}
    h(r)=\frac{1}{r},
    \qquad
    f(r)=2\arctan\frac{1}{r}.
\end{align*}

However, this exact Hopf map is not suitable for the energy because of slow deacy $\m-\e{3}\notin L^2(\R^3).$ 

We now choose the explicit finite-energy modification of the standard Hopf profile
\begin{align*}
    h(r)
    =
    \frac{e^{-r}}{r}.
\end{align*}
The resulting map has Hopf index one and belongs to the admissible energy class, as follows from the behavior of the profile at the origin and its exponential decay at infinity.
The map $\m_\ast$ associated with the profile $h(r)=e^{-r}/r$ satisfies (see appendix~\ref{sec:appendix})
\begin{align}\label{eq:competitor}
    \frac{1}{2^4 3^{\frac38}\pi^2}
    \|\m_\ast-\e{3}\|_{L^2(\R^3)}^{\frac12}
    \|\Delta\m_\ast\|_{L^2(\R^3)}^{\frac32}
    <
    2^{\frac34}+3^{\frac34}.
\end{align}

\begin{proof}[Proof of Theorem~\ref{thm:largeHunit}]
Part~\ref{item:infinitelymany} follows from Proposition~\ref{prop:SigmaHunbound} and by \eqref{eq:SigmaH} together with Proposition~\ref{prop:existence}. For Part~\ref{item:largeHexistence} it suffices to treat the class $k=1$. By Proposition~\ref{prop:existence}, $I_1(H)$ is attained provided
\begin{align*}
    I_1(H) < I_\ell(H)+I_{1-\ell}(H)
    \qquad\text{for all } \ell\in\Z\setminus\{0,1\}.
\end{align*}
For $\ell\in \{-1,2\}$ this is immediate
\begin{align*}
    I_1(H)<I_1(H)+I_2(H)=I_{-1}(H)+I_2(H).
\end{align*}
It therefore remains to verify the inequality for $\ell\in\Z\setminus\{-1,0,1,2\}$. For such $\ell$ the topological lower bound $I_k(H)\ge C(H)|k|^{\frac34}$ \cite{harland} gives
\begin{align*}
    I_\ell(H)+I_{1-\ell}(H)
    \ge C(H)\bigl(|\ell|^{\frac34}+|1-\ell|^{\frac34}\bigr)
    \ge C(H)\bigl(2^{\frac34}+3^{\frac34}\bigr).
\end{align*}
Hence it suffices to show
\begin{align*}
    \frac{I_1(H)}{C(H)} < 2^{\frac34}+3^{\frac34}
    \qquad\text{for all sufficiently large } H.
\end{align*}
Let $\m_\ast$ be the unit charge competitor of the preceding subsection. Since $\m_\ast^\lambda$ has Hopf index one for every $\lambda>0$, we have $I_1(H)\le\inf_{\lambda>0}\E_H[\m_\ast^\lambda]$, and Lemma~\ref{lemma:scalinglimit} yields
\begin{align*}
    \limsup_{H\to\infty}\frac{I_1(H)}{C(H)}
    \le \lim_{H\to\infty}\frac{1}{C(H)}\inf_{\lambda>0}\E_H[\m_\ast^\lambda]
    = \frac{1}{2^4 3^{\frac38}\pi^2}\|\m_\ast-\e{3}\|_{L^2}^{\frac12}\|\Delta\m_\ast\|_{L^2}^{\frac32}.
\end{align*}
By \eqref{eq:competitor} the right-hand side is strictly below $2^{\frac34}+3^{\frac34}$. Thus there exists $H_0>\tfrac14$ such that $I_1(H)/C(H)<2^{\frac34}+3^{\frac34}$ for all $H\ge H_0$, the strict subadditivity condition holds, and $I_1(H)$ is attained.
\end{proof}

\begin{rem}[Quantitative threshold]
The rigorous numerical estimates in Appendix~\ref{app:threshold} show that one may choose $H_0=7.69$ in Theorem~\ref{thm:largeHunit}.
\end{rem}

\subsection{Large-field asymptotics of minimizers}
For every $H\geq H_0$, let $\m_H$ be a minimizer of $\E_H$ in the class of maps with Hopf index one. The upper bound from Lemma~\ref{lemma:energygrowth} and the lower bound obtained by Harland in~\cite{harland} yield that $\E_H[\m_H]$ is of order $H^{\frac14} $ as $H\to\infty.$

\begin{lemma}\label{lemma:asymptotic}
There exists $c,C>0$, $H_0>\frac14$, such that
\begin{align*} 
c H^{\frac14} \le \|\Delta\m_H\|_{L^2(\R^3)}^2
    \le C H^{\frac14} \quad \text{ and } \quad
    c H^{-\frac34} \le \|\m_H-\e{3}\|_{L^2(\R^3)}^2
    \le C H^{-\frac34}.
    \end{align*}
for every $H \ge H_0$. In particular, for $H \geq H_0$,
\begin{align*}
    c H^{-\frac14} \le \|\nabla\m_H\|_{L^2(\R^3)}^2
    \le C H^{-\frac14}.
\end{align*}
\end{lemma}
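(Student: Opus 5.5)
The plan is to combine three ingredients: the two-sided energy bound $c H^{\frac14}\le I_1(H)=\E_H[\m_H]\le CH^{\frac14}$ (Lemma~\ref{lemma:energygrowth} together with \eqref{eq:Harland}, using $C(H)\sim H^{\frac14}$); the stationarity of $\m_H$ under the scaling $\m_H^\lambda$ at $\lambda=1$; and the scale-invariant topological bound of Lemma~\ref{lemma:scaling_invariant_topological_bound}.

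Abbreviate $a=\|\Delta\m_H\|_{L^2}^2$, $b=\|\nabla\m_H\|_{L^2}^2$ and $d=\|\m_H-\e3\|_{L^2}^2$. Since $\m_H^\lambda$ is an admissible competitor with $Q=1$ for every $\lambda$, the function $\lambda\mapsto \E_H[\m_H^\lambda]$ from \eqref{eq:scaling} is minimized at $\lambda=1$. Its derivative there gives the virial identity
\begin{align*}
a+b=3Hd .
\end{align*}
Inserting this into the energy $\E_H=\tfrac12(a-b+Hd)$ yields
\begin{align*}
\E_H[\m_H]=\tfrac23 a-\tfrac13 b .
\end{align*}

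Integration by parts gives $b\le \sqrt{ad}$. The virial identity implies $a\le 3Hd$, so $d\ge a/(3H)$. Conversely, $b\le\sqrt{ad}$ together with $a+b=3Hd$ gives $3Hd\le a+\sqrt{ad}$. Setting $t=\sqrt{d/a}$, this reads $3Ht^2-t-1\le 0$, hence $t\lesssim H^{-\frac12}$ for large $H$. Consequently $d\simeq a/H$ and $b\le\sqrt{ad}\lesssim aH^{-\frac12}$.

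Feeding this into $\E_H=\tfrac23 a-\tfrac13 b$ gives $\E_H\simeq a$ for $H$ large, since $b/a\to0$. The two-sided energy bound then yields $a\simeq H^{\frac14}$ and $d\simeq H^{-\frac34}$, after enlarging $H_0$ if necessary.

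For $b$, the upper bound $b\le\sqrt{ad}\lesssim H^{-\frac14}$ is immediate. The lower bound is the main obstacle, since the scaling identities alone only bound $b$ from above. Here I will use the intermediate estimate \eqref{eq:intermediateVK} with $Q[\m_H]=1$, which gives $b^{\frac12}a^{\frac12}\gtrsim 1$, i.e. $b\gtrsim a^{-1}\gtrsim H^{-\frac14}$.

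A technical point is that differentiability in $\lambda$ and minimality of $\lambda=1$ require that $\m_H$ be a minimizer in the class $Q=1$, which is given. One also needs $\m_H\neq\e3$, which holds since $Q=1$.
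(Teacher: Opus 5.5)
Your proof is correct, but it takes a different route from the paper's.

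\textbf{The paper's route.} It uses neither stationarity under dilations nor the lower energy bound. The upper bounds $\|\Delta\m_H\|_{L^2}^2\lesssim\E_H[\m_H]$ and $H\|\m_H-\e{3}\|_{L^2}^2\lesssim\E_H[\m_H]$ are read off directly from the coercive form \eqref{eq:energyDensity} of the density, with constants uniform for $H\ge H_0$, combined with Lemma~\ref{lemma:energygrowth}. Both lower bounds then follow at once by inserting these upper bounds into the scale-invariant estimate $\|\m_H-\e{3}\|_{L^2}^{1/2}\|\Delta\m_H\|_{L^2}^{3/2}\ge c_0$ of Lemma~\ref{lemma:scaling_invariant_topological_bound}. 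The gradient bounds are handled exactly as you handle them, via interpolation and \eqref{eq:intermediateVK}.

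\textbf{Your route.} You use the Derrick-type identity $a+b=3Hd$ together with $b\le\sqrt{ad}$ to show $\E_H[\m_H]\simeq a$ and $d\simeq a/H$. The two-sided bound $\E_H[\m_H]\simeq H^{1/4}$ then fixes the scale; its lower half is Harland's estimate \eqref{eq:Harland}, i.e.\ Lemmas~\ref{lemma:scaling} and~\ref{lemma:scaling_invariant_topological_bound} combined.

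\textbf{Comparison.} The paper's argument is more robust: it applies verbatim to any charge-one map with energy $\lesssim H^{1/4}$, such as elements of a minimizing sequence, and needs no minimality. Your argument genuinely uses that $\m_H$ is a minimizer. In exchange it gives sharper structural information: the exact relation $\E_H[\m_H]=\tfrac23\|\Delta\m_H\|_{L^2}^2-\tfrac13\|\nabla\m_H\|_{L^2}^2$, and the asymptotic ratios $H\|\m_H-\e{3}\|_{L^2}^2/\|\Delta\m_H\|_{L^2}^2\to\tfrac13$ and $\|\nabla\m_H\|_{L^2}^2/\|\Delta\m_H\|_{L^2}^2\to 0$. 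It also avoids needing an $H$-uniform version of \eqref{eq:densityCoercivity}, which the paper states only with an $H$-dependent constant, although the uniform version is easy to check for $H\ge H_0>\tfrac14$.
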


\begin{proof}
By coercivity, both terms are controlled by the energy:
\begin{align*}
    \|\Delta\m_H\|_{L^2(\R^3)}^2
    \leq
    C\E_H[\m_H]
\end{align*}
and
\begin{align*}
    H\|\m_H-\e{3}\|_{L^2(\R^3)}^2
    \leq
    C\E_H[\m_H].
\end{align*}
Since
\begin{align*}
    \E_H[\m_H]\leq CH^{\frac14},
\end{align*}
we immediately obtain
\begin{align*}
    \|\Delta\m_H\|_{L^2(\R^3)}^2
    \leq
    CH^{\frac14},
    \qquad
    \|\m_H-\e{3}\|_{L^2(\R^3)}^2
    \leq
    CH^{-\frac34}.
\end{align*}

It remains to show the corresponding lower bounds. Since $Q[\m_H]=1$, Lemma~\ref{lemma:scaling_invariant_topological_bound} gives a constant $c_0>0$, independent of $H$, such that
\begin{align*}
    \|\m_H-\e{3}\|_{L^2(\R^3)}^{\frac12}
    \|\Delta\m_H\|_{L^2(\R^3)}^{\frac32}
    \geq
    c_0.
\end{align*}
Combining the lower bound with the corresponding upper bounds yields the lower bounds in terms of $H$. The usual interpolation (integration by parts and Hölder's inequality) together with \eqref{eq:intermediateVK} yield the corresponding bounds of $\| \nabla \m_H\|_{L^2}$.
\end{proof}

Consequently,
\begin{align*}
    \m_H-\e{3}\to 0
    \qquad\text{strongly in }H^1(\R^3;\R^3).
\end{align*}
On the other hand,
\begin{align*}
    \|\Delta\m_H\|_{L^2(\R^3)}^2
    \geq
    cH^{\frac14}\to\infty,
\end{align*}
so the sequence does not converge in $H^2$. Thus, up to translations, the nontrivial energy concentrates on a shrinking length scale.

Define
\begin{align*}
    \widetilde{\m}_H(x)
    =
    \m_H\bigl(H^{-\frac14}x\bigr).
\end{align*}
This is the natural scaling on which both
\begin{align*}
    \|\Delta\widetilde{\m}_H\|_{L^2(\R^3)}^2
    \qquad\text{and}\qquad
    \|\widetilde{\m}_H-\e{3}\|_{L^2(\R^3)}^2
\end{align*}
remain of order one. In particular, this is the scale on which one expects a nontrivial limiting profile.

The corresponding rescaled energy is
\begin{align*}
    \tE_H(\m)
    =
    \frac12
    \int_{\R^3}
    \left(
        |\Delta\m|^2
        -
        {H^{-\frac12}}|\nabla\m|^2
        +
        |\m-\e{3}|^2
    \right)
    \,\d{x}.
\end{align*}

Formally letting $H\to\infty$ leads to the limiting energy
\begin{align*}
    \E^\infty(\m)
    =
    \frac12
    \int_{\R^3}
    \left(
        |\Delta\m|^2
        +
        |\m-\e{3}|^2
    \right)
    \,\d{x}.
\end{align*}

The associated variational problem for $k\in\Z$ is
\begin{align*}
    I^\infty_k
    =
    \inf
    \left\{
        \E^\infty(\m)
        \,\middle|\,
        \m\in H^2(\R^3;\S^2),
        \ Q[\m]=k
    \right\}.
\end{align*}
Since $Q[\m^\lambda]=Q[\m]$ for every $\lambda>0$, we may first minimize with respect to the scaling parameter. This gives
\begin{align}
    \label{eq:Iinftyscaleinvariant}
    I^\infty_k
    =
    \frac{2}{3^{\frac34}}
    \inf
    \left\{
        \|\m-\e{3}\|_{L^2}^{\frac12}
        \|\Delta\m\|_{L^2}^{\frac32}
        \,\middle|\,
        \m\in H^2(\R^3;\S^2),
        \ Q[\m]=k
    \right\}.
\end{align}

\begin{lemma}\label{lemma:energyasymptotic}
For every $H>\frac14$, we have
\begin{align*}
    \frac{\sqrt{1+(12H)^{-1}}-2(12H)^{-\frac12}}
    {\sqrt{\sqrt{1+(12H)^{-1}}-(12H)^{-\frac12}}}
    I_k^\infty
    \le
    \frac{I_k(H)}{H^{\frac14}}
    \le
    I_k^\infty.
\end{align*}
In particular,
\begin{align*}
    \lim_{H\to\infty}\frac{I_k(H)}{H^{\frac14}}
    =
    I_k^\infty.
\end{align*}
\end{lemma}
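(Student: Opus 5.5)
The plan is to prove the two inequalities separately, reducing both to the scale-invariant quantity
\begin{align*}
A[\m]:=\|\m-\e{3}\|_{L^2}^{\frac12}\|\Delta\m\|_{L^2}^{\frac32},
\end{align*}
and then to use the representation \eqref{eq:Iinftyscaleinvariant} of $I_k^\infty$ as $\frac{2}{3^{3/4}}$ times the infimum of $A$ over the class $Q[\m]=k$. Throughout we use that $Q[\m^\lambda]=Q[\m]$ for every $\lambda>0$, so $I_k(H)=\inf\{\inf_{\lambda>0}\E_H[\m^\lambda]:Q[\m]=k\}$.

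\emph{Upper bound.} Fix $\m\in H^2(\R^3;\S^2)$ with $Q[\m]=k$.
\begin{itemize}
\item In \eqref{eq:scaling} drop the nonpositive term $-\frac{1}{2\lambda}\|\nabla\m\|_{L^2}^2$.
\item Substitute $\lambda=H^{\frac14}\mu$. This gives
\begin{align*}
\E_H[\m^{\lambda}]\le H^{\frac14}\Big(\frac{\mu}{2}\|\Delta\m\|_{L^2}^2+\frac{1}{2\mu^3}\|\m-\e{3}\|_{L^2}^2\Big)=H^{\frac14}\,\E^\infty(\m^{\mu}).
\end{align*}
\item Since $\m^{\lambda}$ has charge $k$, we get $I_k(H)\le H^{\frac14}\E^\infty(\m^\mu)$ for every $\mu>0$.
\item Taking the infimum over $\mu$ and then over $\m$ yields $I_k(H)\le H^{\frac14}I_k^\infty$.
\end{itemize}

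\emph{Lower bound.} Fix $\m\in H^2(\R^3;\S^2)$ with $Q[\m]=k$.
\begin{itemize}
\item By Lemma~\ref{lemma:scaling}, $\E_H[\m]\ge\inf_{\lambda}\E_H[\m^\lambda]$. Since the coefficient in \eqref{eq:scalingFormula} is decreasing in $\theta\in[0,1]$, this is at least
\begin{align*}
\frac{\sqrt2}{3}\,g(H)\,A[\m],\qquad g(H)=\frac{\sqrt{1+12H}-2}{\sqrt{\sqrt{1+12H}-1}}.
\end{align*}
\item Set $\delta=(12H)^{-\frac12}$ and factor $(12H)^{\frac14}$ out of $g$. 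The numerator equals $(12H)^{\frac12}\bigl(\sqrt{1+\delta^2}-2\delta\bigr)$ and the denominator equals $(12H)^{\frac14}\sqrt{\sqrt{1+\delta^2}-\delta}$. Hence
\begin{align*}
g(H)=(12H)^{\frac14}\,\frac{\sqrt{1+\delta^2}-2\delta}{\sqrt{\sqrt{1+\delta^2}-\delta}},
\end{align*}
and the last fraction is exactly the prefactor in the statement.
\item The constants match: $\frac{\sqrt2}{3}\,12^{\frac14}=\frac{2}{3^{3/4}}$.
\item Therefore
\begin{align*}
\E_H[\m]\ge(\text{prefactor})\cdot H^{\frac14}\cdot\frac{2}{3^{3/4}}A[\m]\ge(\text{prefactor})\cdot H^{\frac14}I_k^\infty,
\end{align*}
where the last step uses \eqref{eq:Iinftyscaleinvariant}.
\item For $H>\frac14$ we have $\sqrt{1+12H}>2$, so the prefactor is positive and the direction of the inequality is unaffected. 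Taking the infimum over $\m$ gives the lower bound.
\end{itemize}

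\emph{Limit.} As $H\to\infty$ we have $\delta\to0$, so the prefactor tends to $1$. The two-sided bound then gives $I_k(H)/H^{\frac14}\to I_k^\infty$.

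\emph{Main obstacle.} There is no genuine analytic difficulty; the argument is purely algebraic. The only delicate points are the algebraic identification of the prefactor and the consistency of the constant $\frac{2}{3^{3/4}}$ in \eqref{eq:Iinftyscaleinvariant} with the $\theta=0$ case of Lemma~\ref{lemma:scaling} at $H=1$. I would double-check both by direct computation.
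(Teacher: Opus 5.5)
Your proof is correct and takes essentially the paper's route: the upper bound is the $\theta=0$ (gradient-dropped) case of Lemma~\ref{lemma:scaling}, rewritten via $\lambda=H^{\frac14}\mu$, and the lower bound is the $\theta=1$ bound of that lemma combined with \eqref{eq:Iinftyscaleinvariant}. Your factorization of $(12H)^{\frac14}$, the identity $\frac{\sqrt2}{3}\,12^{\frac14}=\frac{2}{3^{3/4}}$, and the positivity check for $H>\frac14$ supply exactly the algebra the paper leaves implicit.
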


\begin{proof}
The estimates follow directly from Lemma~\ref{lemma:scaling} and
\eqref{eq:Iinftyscaleinvariant}.
\end{proof}

\begin{lemma}\label{lemma:limitproblem}
For every sequence $H_j\to\infty$, the rescaled sequence $(\widetilde{\m}_{H_j})_{j\in\N}$ is a minimizing sequence for $I^\infty_1$.
\end{lemma}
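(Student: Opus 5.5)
The plan is to compare $\E^\infty(\widetilde{\m}_{H_j})$ with $I_1(H_j)/H_j^{\frac14}$ and then use Lemma~\ref{lemma:energyasymptotic}. Each $\widetilde{\m}_{H}$ is admissible for $I^\infty_1$, since $\widetilde{\m}_H-\e{3}\in H^2$ and the Hopf index is scale invariant, so $Q[\widetilde{\m}_H]=Q[\m_H]=1$. This gives the trivial bound $\E^\infty(\widetilde{\m}_H)\ge I_1^\infty$. It therefore suffices to prove
\begin{align*}
\limsup_{j\to\infty}\E^\infty(\widetilde{\m}_{H_j})\le I^\infty_1 .
\end{align*}

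\textbf{Step 1 (scaling identity).} With $\lambda=H^{-\frac14}$ in \eqref{eq:scaling}, a change of variables gives
\begin{align*}
\E_H[\m_H]=H^{\frac14}\,\tE_H(\widetilde{\m}_H)
=H^{\frac14}\Bigl(\E^\infty(\widetilde{\m}_H)-\tfrac12 H^{-\frac12}\|\nabla\widetilde{\m}_H\|_{L^2}^2\Bigr).
\end{align*}
Since $\m_H$ is a minimizer, $\E_H[\m_H]=I_1(H)$, and hence
\begin{align*}
\E^\infty(\widetilde{\m}_H)=\frac{I_1(H)}{H^{\frac14}}+\tfrac12 H^{-\frac12}\|\nabla\widetilde{\m}_H\|_{L^2}^2 .
\end{align*}

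\textbf{Step 2 (control of the gradient term).} Again by scaling, $\|\nabla\widetilde{\m}_H\|_{L^2}^2=H^{\frac14}\|\nabla\m_H\|_{L^2}^2$. Lemma~\ref{lemma:asymptotic} gives $\|\nabla\m_H\|_{L^2}^2\le CH^{-\frac14}$, so $\|\nabla\widetilde{\m}_H\|_{L^2}^2\le C$ uniformly for $H\ge H_0$. A cruder route avoids that lemma: interpolation gives $\|\nabla\widetilde{\m}_H\|_{L^2}^2\le\|\widetilde{\m}_H-\e{3}\|_{L^2}\|\Delta\widetilde{\m}_H\|_{L^2}\le 2\E^\infty(\widetilde{\m}_H)$. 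Inserting this into Step 1 yields $\E^\infty(\widetilde{\m}_H)(1-H^{-\frac12})\le I_1(H)H^{-\frac14}\le I_1^\infty$, and this can be absorbed for $H$ large. In either version the correction term is $O(H^{-\frac12})$.

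\textbf{Step 3 (conclusion).} By Lemma~\ref{lemma:energyasymptotic}, $I_1(H_j)/H_j^{\frac14}\le I_1^\infty$, and also $\to I_1^\infty$. Step 1 then gives
\begin{align*}
I_1^\infty\le\E^\infty(\widetilde{\m}_{H_j})\le I_1^\infty+CH_j^{-\frac12}\to I_1^\infty .
\end{align*}
So $(\widetilde{\m}_{H_j})_{j\in\N}$ is minimizing for $I_1^\infty$.

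The only point requiring care is Step 2: one must make sure the negative gradient term, multiplied by $H^{-\frac12}$, is uniformly negligible on the rescaled level. The interpolation argument handles this without further input.
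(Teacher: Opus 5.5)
Your proof is correct and follows the paper's own argument: the scaling identity $\E^\infty(\widetilde{\m}_H)=I_1(H)H^{-\frac14}+\tfrac12 H^{-\frac12}\|\nabla\widetilde{\m}_H\|_{L^2}^2$, combined with the uniform gradient bound from Lemma~\ref{lemma:asymptotic} and the bound and limit from Lemma~\ref{lemma:energyasymptotic}. Your interpolation variant in Step 2 is a small improvement, since it removes the dependence on Lemma~\ref{lemma:asymptotic} and needs only $I_1(H)\le H^{\frac14}I_1^\infty$; moreover $\|\widetilde{\m}_H-\e{3}\|_{L^2}\|\Delta\widetilde{\m}_H\|_{L^2}\le\E^\infty(\widetilde{\m}_H)$, so your factor $2$ is not needed.
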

\begin{proof}
    Let $H_j\to\infty$ be arbitrary and set
\begin{align*}
    \bs{u}_j:=\widetilde{\m}_{H_j}.
\end{align*}
By the definitions of $\tE_{H_j}$ and $\E^\infty$, we have
\begin{align}
    \E^\infty(\bs{u}_j)
    &=
    \tE_{H_j}(\bs{u}_j)
    +
    \frac{1}{2H_j^{\frac12}}
    \|\nabla\bs{u}_j\|_{L^2(\R^3)}^2
    \notag\\
    &=
    \frac{I_1(H_j)}{H_j^{\frac14}}
    +
    \frac{1}{2H_j^{\frac12}}
    \|\nabla\bs{u}_j\|_{L^2(\R^3)}^2.
    \label{eq:rescaled_energy_identity}
\end{align}
Thus, by Lemma~\ref{lemma:asymptotic} and Lemma~\ref{lemma:energyasymptotic}, $(\bs{u}_j)_{j\in\N}$ is a minimizing sequence for $I^\infty_1$.
\end{proof}

The limiting problem shares the structural features of $\E_H$ that were used in Section~\ref{sec:dichotomy}: the density
\begin{align*}
    e^\infty(\m)
    =
    \frac12
    \left(
        |\Delta\m|^2
        +
        |\m-\e{3}|^2
    \right)
\end{align*}
is nonnegative and coercive in the sense of \eqref{eq:densityCoercivity}, and both $\E^\infty$ and $Q$ are invariant under translations. The concentration-compactness scheme of Section~\ref{sec:dichotomy} therefore applies to minimizing sequences of
\begin{align*}
    I^\infty_k
    =
    \inf
    \left\{
        \E^\infty(\m)
        \,\middle|\,
        \m\in H^2(\R^3;\S^2),
        \ Q[\m]=k
    \right\},
    \qquad k\in\Z,
\end{align*}
with $e$ replaced by $e^\infty$. A key feature of the limiting problem is that its exact scaling structure makes the required strict subadditivity for the unit charge a direct consequence of the estimates established before.

Lemma~\ref{lemma:scaling_invariant_topological_bound} with \eqref{eq:Iinftyscaleinvariant} yields the topological lower bound
\begin{align}
    \label{eq:limitTopologicalBound}
    I^\infty_k
    \ge
    c_\infty|k|^{\frac34},
    \qquad
    c_\infty
    :=
    \frac{2^5\pi^2}{3^{\frac38}}
    =
    \lim_{H\to\infty}\frac{C(H)}{H^{\frac14}},
\end{align}
the last identity being immediate from the explicit form of $C(H)$.

\begin{prop}\label{prop:limitcompactness}
Let $(\bs{u}_j)_{j\in\N}\subset H^2(\R^3;\S^2)$ with $Q[\bs{u}_j]=1$ be a minimizing sequence for $I^\infty_1$. Then, after passing to a subsequence, the energy densities $(e^\infty(\bs{u}_j))_{j\in\N}$ satisfy the compactness alternative~\ref{case:compactness}: there exists a sequence of centers $(y_j)_{j\in\N}\subset\R^3$ such that for every $\varepsilon>0$ there is a radius $R>0$ with
\begin{align*}
    \int_{\R^3\setminus B_R(y_j)}
    e^\infty(\bs{u}_j)
    \d{x}
    \le
    \varepsilon
\end{align*}
for $j\in\N$ large enough. In particular, neither vanishing~\ref{case:vanishing} nor dichotomy~\ref{case:dichotomy} occurs.
\end{prop}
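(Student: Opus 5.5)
We apply Lions' concentration-compactness principle to the densities $e^\infty(\bs u_j)$, after passing to a subsequence, and rule out vanishing and dichotomy. This parallels Section~\ref{sec:dichotomy}; the new ingredient is that strict subadditivity for charge one in the limiting problem follows from the competitor bound \eqref{eq:competitor} and the exact scaling identity \eqref{eq:Iinftyscaleinvariant}.

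\emph{Vanishing.} We repeat the proof of Lemma~\ref{lemma:vanishing} verbatim with $e$ replaced by $e^\infty$. The interior elliptic estimate bounds $\|\bs u_j-\e3\|_{H^2(B_1(y))}^2$ by $\int_{B_2(y)}|\Delta\bs u_j|^2+|\bs u_j-\e3|^2\,\d x$. Under vanishing this tends to zero uniformly in $y$, hence $\|\bs u_j-\e3\|_{L^\infty}\to0$. Then $\bs u_j$ is null-homotopic, which contradicts $Q[\bs u_j]=1$.

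\emph{Dichotomy.} Lemma~\ref{cut-offconstruction} is purely a statement about $H^2$ maps, so it applies unchanged. Lemma~\ref{lemma:loc} was only used through the bounds $\|\nabla\m\|_{L^4}^4+\|\nabla\m\|_{L^2}^2\lesssim \|\m-\e3\|_{H^2}^2$, and these hold with $\E^\infty$ in place of $\E_H$ by coercivity of $e^\infty$. The argument of Lemma~\ref{lemma:dichotomy} therefore goes through. It gives the charge splitting \eqref{eq:chargesplit}, with $Q[\bs u_j^{(1)}]=\ell$ constant along a subsequence, and it excludes $\ell\in\{0,1\}$ by the same energy argument, using $I_1^\infty\ge c_\infty>0$. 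The conclusion is
\begin{align*}
I_1^\infty\ge I_\ell^\infty+I_{1-\ell}^\infty \quad\text{for some }\ell\in\Z\setminus\{0,1\}.
\end{align*}

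It remains to show this is impossible, i.e.\ $I_1^\infty<I_\ell^\infty+I_{1-\ell}^\infty$ for all such $\ell$. The inversion symmetry \eqref{eq:reflection} holds equally for $I^\infty$. For $\ell\in\{-1,2\}$ the right-hand side equals $I_1^\infty+I_2^\infty>I_1^\infty$, since $I_2^\infty\ge c_\infty 2^{3/4}>0$ by \eqref{eq:limitTopologicalBound}. For every other $\ell$, \eqref{eq:limitTopologicalBound} gives
\begin{align*}
I_\ell^\infty+I_{1-\ell}^\infty\ge c_\infty\bigl(2^{\frac34}+3^{\frac34}\bigr).
\end{align*}
On the other side, \eqref{eq:Iinftyscaleinvariant} applied to $\m_\ast$ gives $I_1^\infty\le \frac{2}{3^{3/4}}\|\m_\ast-\e3\|_{L^2}^{1/2}\|\Delta\m_\ast\|_{L^2}^{3/2}$. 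The constants combine as
\begin{align*}
\frac{2}{3^{\frac34}}\cdot 2^43^{\frac38}\pi^2=\frac{2^5\pi^2}{3^{\frac38}}=c_\infty,
\end{align*}
so \eqref{eq:competitor} yields $I_1^\infty<c_\infty(2^{3/4}+3^{3/4})$, which is the required strict inequality. Consequently dichotomy cannot occur, and only the compactness alternative remains.

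The main obstacle is the careful transfer of Lemma~\ref{lemma:dichotomy} to $\E^\infty$. All constants there, in particular in the charge-splitting estimates via Lemma~\ref{lemma:loc} and the Biot--Savart tail bound, must depend only on $\sup_j\E^\infty(\bs u_j)$ and not on $H$. I would verify this by noting that every estimate in that proof passes through the $H^2$ norm, which is controlled by $\E^\infty$ through coercivity. The constant check above is then elementary.
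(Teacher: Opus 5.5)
Your proposal is correct and follows the paper's proof. You exclude vanishing as in Lemma~\ref{lemma:vanishing}, transfer dichotomy via Lemma~\ref{cut-offconstruction} and the argument of Lemma~\ref{lemma:dichotomy}, and rule out the resulting splitting using $I^\infty_{-1}=I^\infty_1$ for $\ell\in\{-1,2\}$ and \eqref{eq:limitTopologicalBound} for all other $\ell$. The only minor difference is that you get $I_1^\infty<c_\infty(2^{\frac34}+3^{\frac34})$ by evaluating \eqref{eq:Iinftyscaleinvariant} at $\m_\ast$ and checking $\frac{2}{3^{3/4}}\cdot 2^4 3^{3/8}\pi^2=c_\infty$. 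The paper instead passes through Lemma~\ref{lemma:energyasymptotic} and the limit of $I_1(H)/C(H)$ from the proof of Theorem~\ref{thm:largeHunit}, so your route is slightly more direct since it avoids the passage $H\to\infty$.
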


\begin{proof}
Since $Q[\bs{u}_j]=1\neq 0$, vanishing~\ref{case:vanishing} is excluded exactly as in Lemma~\ref{lemma:vanishing}.

Assume dichotomy~\ref{case:dichotomy}. By Lemma~\ref{cut-offconstruction} and the argument of Lemma~\ref{lemma:dichotomy}, applied with $e$ replaced by $e^\infty$, there is an $\ell\in\Z\setminus\{0,1\}$ with
\begin{align*}
    I^\infty_1
    \ge
    I^\infty_\ell
    +
    I^\infty_{1-\ell}.
\end{align*}
We rule this out for every such $\ell$. For $\ell\in\{-1,2\}$ we have $I^\infty_{-1}=I^\infty_1$, hence
\begin{align*}
    I^\infty_\ell+I^\infty_{1-\ell}
    =
    I^\infty_1+I^\infty_2
    >
    I^\infty_1.
\end{align*}
For $\ell\in\Z\setminus\{-1,0,1,2\}$, we use \eqref{eq:limitTopologicalBound} to conclude
\begin{align*}
    I^\infty_\ell+I^\infty_{1-\ell}
    \ge
    c_\infty
    \bigl(
        |\ell|^{\frac34}+|1-\ell|^{\frac34}
    \bigr)
    \ge
    c_\infty
    \bigl(
        2^{\frac34}+3^{\frac34}
    \bigr).
\end{align*}
On the other hand, Lemma~\ref{lemma:energyasymptotic} and \eqref{eq:limitTopologicalBound} give
\begin{align*}
    \frac{I^\infty_1}{c_\infty}
    =
    \lim_{H\to\infty}
    \frac{I_1(H)}{H^{\frac14}}
    \frac{H^{\frac14}}{C(H)}
    =
    \lim_{H\to\infty}
    \frac{I_1(H)}{C(H)},
\end{align*}
which is strictly smaller than $2^{\frac34}+3^{\frac34}$ by \eqref{eq:competitor} and the proof of Theorem~\ref{thm:largeHunit}. Hence $I^\infty_1<I^\infty_\ell+I^\infty_{1-\ell}$ in all cases, a contradiction. Only~\ref{case:compactness} remains.
\end{proof}

We can now complete the proof of Theorem~\ref{thm:profile}.

\begin{proof}[Proof of Theorem~\ref{thm:profile}]
Part~\ref{item:profileTrivial} was obtained in the discussion preceding Lemma~\ref{lemma:limitproblem}.

For~\ref{item:profileLimit}, let $H_j\to\infty$. By Lemma~\ref{lemma:limitproblem} the sequence $(\widetilde{\m}_{H_j})_{j\in\N}$ is a minimizing sequence for $I^\infty_1$ with $Q[\widetilde{\m}_{H_j}]=1$, so Proposition~\ref{prop:limitcompactness} provides, after passing to a subsequence, centers $y_j\in\R^3$ such that $\bs{u}_j=\widetilde{\m}_{H_j}(\,\cdot\,-y_j)$ satisfies
\begin{align}
    \label{eq:profileTightness}
    \int_{\R^3\setminus B_R}
    e^\infty(\bs{u}_j)
    \d{x}
    \le\varepsilon
    \qquad\text{for all large } j,
\end{align}
where $R=R(\varepsilon)$, and $\E^\infty(\bs{u}_j)\to I^\infty_1$ by translation invariance.

Since $\|\Delta\bs{u}_j\|_{L^2}^2+\|\bs{u}_j-\e{3}\|_{L^2}^2=2\E^\infty(\bs{u}_j)$ and
\begin{align}
    \label{eq:gradientInterpolation}
    \|\nabla \bs{v}\|_{L^2(\R^3)}^2
    \le
    \|\bs{v}\|_{L^2(\R^3)}
    \|\Delta \bs{v}\|_{L^2(\R^3)}
    \qquad\text{for } \bs{v}\in H^2(\R^3;\R^3),
\end{align}
the elliptic estimate on $\R^3$ shows that $(\bs{u}_j-\e{3})_{j\in\N}$ is bounded in $H^2$. Hence, along a further subsequence, $\bs{u}_j-\e{3}\rightharpoonup\bs{u}-\e{3}$ in $H^2(\R^3;\R^3)$ for some $\bs{u}$, and by Rellich $\bs{u}_j\to\bs{u}$ in $H^1_{\loc}(\R^3)$ and almost everywhere, so that $\bs{u}\in H^2(\R^3;\S^2)$. Combining \eqref{eq:profileTightness} with the local convergence gives $\bs{u}_j-\e{3}\to\bs{u}-\e{3}$ in $L^2(\R^3)$, and \eqref{eq:gradientInterpolation} applied to $\bs{v}=\bs{u}_j-\bs{u}$ upgrades this to strong convergence in $H^1(\R^3)$.

Exactly as in the proof of Proposition~\ref{prop:existence}, the weak convergence in $H^2(\R^3)$ together with the strong convergence in $H^1(\R^3)$ yields uniform convergence, hence $Q[\bs{u}]=1$. Weak lower semicontinuity then yields
\begin{align*}
    \E^\infty(\bs{u}) \le \liminf_{j\to\infty} \E^\infty(\bs{u}_j) =I^\infty_1,
\end{align*}
so $\bs{u}$ is a minimizer. Finally, $\|\bs{u}_j-\e{3}\|_{L^2}\to\|\bs{u}-\e{3}\|_{L^2}$ and $\E^\infty(\bs{u}_j)\to \E^\infty(\bs{u})$ force $\|\Delta\bs{u}_j\|_{L^2}\to\|\Delta\bs{u}\|_{L^2}$, which together with $\Delta\bs{u}_j\rightharpoonup\Delta\bs{u}$ in $L^2(\R^3)$ gives $\Delta\bs{u}_j\to\Delta\bs{u}$ strongly in $L^2(\R^3)$. Hence $\bs{u}_j - \e 3\to\bs{u} - \e 3$ in $H^2(\R^3;\R^3)$.
\end{proof}

\appendix
\section{Rigorous numerical estimates for the unit-charge competitor}
\label{sec:appendix}
\subsection{Scale-invariant energy bound}
We provide the computational details underlying the strict bound~\eqref{eq:competitor}. The following elementary identity reduces the Laplacian of the radial ansatz to one-dimensional expressions.
\begin{lemma}
Let $F\in C^2(0,\infty)$, and let $P$ be a homogeneous polynomial of degree $\ell$ on $\R^3$. Then, for $x\neq 0$ and $r=|x|$,
\begin{align*}
    \Delta\left(
        F(r)\frac{P(x)}{r^\ell}
    \right)
    =
    (D_\ell F)(r)\frac{P(x)}{r^\ell}
    +
    F(r)\frac{\Delta P(x)}{r^\ell},
\end{align*}
where
\begin{align*}
    D_\ell F
    :=
    F''
    +
    \frac{2}{r}F'
    -
    \frac{\ell(\ell+1)}{r^2}F .
\end{align*}
\end{lemma}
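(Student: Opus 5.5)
The plan is to compute directly, using the product rule for the Laplacian,
\begin{align*}
    \Delta(uv)=u\,\Delta v+2\nabla u\cdot\nabla v+v\,\Delta u,
\end{align*}
applied with $u=F(r)r^{-\ell}$ and $v=P(x)$. Everything then reduces to three elementary facts about radial functions and homogeneous polynomials, all valid for $x\neq0$.

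The three facts are as follows.
\begin{enumerate}[label=(\alph*)]
    \item For a radial function $G(r)$ one has $\nabla G=G'(r)\,x/r$ and $\Delta G=G''+\frac{2}{r}G'$.
    \item Euler's identity for a polynomial homogeneous of degree $\ell$ gives $x\cdot\nabla P=\ell P$.
    \item Consequently, $\nabla G\cdot\nabla P=\frac{G'}{r}\,x\cdot\nabla P=\frac{\ell G'}{r}P$.
\end{enumerate}
Substituting these into the product rule gives
\begin{align*}
    \Delta(GP)=G\,\Delta P+\Bigl(G''+\frac{2(\ell+1)}{r}G'\Bigr)P .
\end{align*}

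Next I would insert $G=F r^{-\ell}$. Computing
\begin{align*}
    G'&=F' r^{-\ell}-\ell F r^{-\ell-1},\\
    G''&=F'' r^{-\ell}-2\ell F' r^{-\ell-1}+\ell(\ell+1)F r^{-\ell-2},
\end{align*}
one obtains
\begin{align*}
    G''+\frac{2(\ell+1)}{r}G'
    =r^{-\ell}\Bigl(F''+\frac{2}{r}F'+\bigl[\ell(\ell+1)-2\ell(\ell+1)\bigr]\frac{F}{r^2}\Bigr)
    =r^{-\ell}D_\ell F .
\end{align*}
Here the $F'$ coefficient comes out as $-2\ell+2(\ell+1)=2$, so the mixed terms cancel exactly. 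This cancellation is the only delicate point, and it reduces to checking the coefficients above. Multiplying back by $P$ yields the claimed identity
\begin{align*}
    \Delta\Bigl(F(r)\frac{P(x)}{r^\ell}\Bigr)
    =(D_\ell F)(r)\frac{P(x)}{r^\ell}+F(r)\frac{\Delta P(x)}{r^\ell}.
\end{align*}

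No real obstacle is expected beyond this bookkeeping, since $F\in C^2$ and the singularity sits only at the origin. As a check, for harmonic $P$ the formula recovers the familiar spherical-harmonic radial operator $D_\ell$. In the paper's application I expect the degrees $\ell=0,1,2$ to appear (e.g. $x_3^2$), where $\Delta P$ is constant or zero.
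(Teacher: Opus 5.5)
Your proposal is correct and follows the same route as the paper's proof. The paper just cites the radial differentiation rules and Euler's identity $x\cdot\nabla P=\ell P$, and your product-rule computation with $G=F r^{-\ell}$, including the check that the $F'$ coefficient comes out to $2$ and the $F$ coefficient to $-\ell(\ell+1)$, is exactly the bookkeeping that proof leaves implicit.
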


\begin{proof}
Using the elementary differentiation rules for radial functions together with
\begin{align*}
    x\cdot\nabla P=\ell P,
\end{align*}
the claim follows immediately.
\end{proof}
With the explicit form
\begin{align} \label{eq:mAnsatz}
    \m(x)
    =
    \begin{pmatrix}
    2\left(
    \dfrac{x_1x_3}{r^2}\sin^2 f(r)
    +
    \dfrac{x_2}{r}\sin f(r)\cos f(r)
    \right)
    \\
    2\left(
    \dfrac{x_2x_3}{r^2}\sin^2 f(r)
    -
    \dfrac{x_1}{r}\sin f(r)\cos f(r)
    \right)
    \\
    1
    -
    2\dfrac{x_1^2+x_2^2}{r^2}\sin^2 f(r)
    \end{pmatrix},
\end{align}
together with the preceding lemma, we obtain
\begin{align*}
    \int_{\R^3}|\Delta\m|^2\,\d x
    =
    \int_0^\infty
    \Bigg[
    \frac{32\pi}{3}r^2
    \left(
    \bigl(D_1(\sin f(r)\cos f(r))\bigr)^2
    +
    \bigl(D_2(\sin^2 f(r))\bigr)^2
    \right)
    \\
    +
    \frac{256\pi}{3}
    D_2(\sin^2 f(r))\sin^2 f(r)
    +
    256\pi
    \frac{\sin^4 f(r)}{r^2}
    \Bigg]\,\d r.
\end{align*}
We also have
\begin{align*}
    \int_{\R^3} |\m - \e 3|^2 \d x  =  \frac{32\pi}{3}
    \int_0^\infty
    r^2\sin^2 f(r)
    \,\d r.
\end{align*}
For $f(r) = 2\arctan \frac{e^{-r}}{r}$, this gives
\begin{align*}
    \|\m-\e{3}\|_{L^2(\R^3)}^2 =
    \int_0^\infty\mathcal{I}_{0}(r)
    \,\d r
\end{align*}
with 
\begin{align*}
   \mathcal{I}_{0}(r)= \frac{128\pi}{3} \frac{r^4e^{2r}}{\left(1+r^2e^{2r}\right)^2}
\end{align*}
and
\begin{align*}
    \|\Delta\m\|_{L^2(\R^3)}^2 = \int_0^\infty \mathcal{I}_{2}(r)
    \,\d r
\end{align*}
with 
\begin{align*}
\mathcal{I}_{2}(r) =\frac{128\pi}{3}\frac{r^2e^{2r}}{\left(1+r^2e^{2r}\right)^4} \Big(
        &r^6e^{4r}
        -4r^4e^{4r}
        +14r^4e^{2r}
        +56r^3e^{2r}
        +4r^2e^{4r}\\
        &
        +132r^2e^{2r}
        +r^2
        +144re^{2r}
        +8r
        +96e^{2r}
        +16
    \Big).
\end{align*}
We evaluate the integrals on $[0,40]$ using arbitrary-precision ball arithmetic. This yields rigorous upper bounds for the integrals over $[0,40]$.
For the tails, we use
\begin{align*}
\mathcal I_0(r) \le \frac{128\pi}{3}e^{-2r}  \qand 
\mathcal I_2(r) \le \frac{128\pi}{3}\,472\,e^{-2r}.
\end{align*}
Consequently, for every $R\ge1$,
\begin{align*}
\int_R^\infty \mathcal I_0(r)\,\d r \le \frac{64\pi}{3}e^{-2R} \qand \int_R^\infty \mathcal I_2(r)\,\d r \le \frac{64\pi}{3}\,472\,e^{-2R}.
\end{align*}
Combining the interval enclosures on $[0,40]$ with the tail estimates
gives
\begin{align}\label{eq:chargeOneCompetitorValue} 
\frac{1}{2^4 3^{\frac38}\pi^2} \|\m-\e{3}\|_{L^2}^{\frac12} \|\Delta\m\|_{L^2}^{\frac32} < 3.636802.
\end{align}

\subsection{Quantitative existence threshold}\label{app:threshold}

\begin{lemma}
For $\m$ as in \eqref{eq:mAnsatz}, we have in spherical coordinates
\begin{align*}
    |\nabla \m|^2 = 4 f'(r)^2 \sin^2 \theta  + 4 \frac{\sin^2 f(r)}{r^2} (1+\cos^2 \theta).
\end{align*}
In particular, for $f(r) = 2 \arctan \frac{e^{-r}}{r}$
\begin{align*}
    \int_{\R^3} |\nabla \m|^2 \d x = \int_0^{\infty} \mathcal{I}_1(r) \d r
\end{align*}
with \begin{align*}
    \mathcal{I}_1 (r) = \frac{128 \pi}{3} \left( \frac{(r^2+2r+3)r^2 e^{2r}}{(1+r^2 e^{2r})^2}\right).
\end{align*}
\end{lemma}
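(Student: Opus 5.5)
The plan is a direct computation in spherical coordinates from the explicit form \eqref{eq:mAnsatz}, followed by angular integration and substitution of the profile. Write $\m=\pi_{\mathrm{Hopf}}\circ Z$ and use the identity $|\nabla\m|^2=4\sum_i\bigl(|\partial_i Z|^2-|\langle Z,\partial_i Z\rangle_{\C}|^2\bigr)$. This is the standard fact that the Hopf map is, up to the factor $4$, a Riemannian submersion onto $\S^2$ of radius one. The gradient norm of $\m$ therefore equals $4$ times the horizontal part of the gradient of $Z$.

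In the orthonormal frame $\e r,\e\theta,\e\varphi$ we compute the three directional derivatives of $Z$. Here $Z_1=\sin\theta e^{i\varphi}\sin f$ and $Z_2=\cos\theta\sin f+i\cos f$.

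For $\partial_r$ we get $\partial_r Z=f'(\sin\theta e^{i\varphi}\cos f,\ \cos\theta\cos f-i\sin f)$. Then $|\partial_rZ|^2=f'^2$ and $\langle Z,\partial_r Z\rangle_{\C}=\overline{Z_1}\partial_rZ_1+\overline{Z_2}\partial_rZ_2=-if'\cos\theta$. The radial horizontal contribution is therefore $f'^2\sin^2\theta$.

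For $r^{-1}\partial_\theta$ the derivative is $r^{-1}\sin f(\cos\theta e^{i\varphi},-\sin\theta)$. This has squared norm $\sin^2 f/r^2$, and one checks that its vertical component vanishes.

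For $(r\sin\theta)^{-1}\partial_\varphi$ the derivative is $(r\sin\theta)^{-1}(i\sin\theta e^{i\varphi}\sin f,0)$. Its norm squared is $\sin^2 f/r^2$, and its inner product with $Z$ is $i\sin^2\theta\sin^2 f/(r\sin\theta)$. The horizontal part is then $\frac{\sin^2 f}{r^2}(1-\sin^2\theta\sin^2 f)$.

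Summing these gives $|\nabla\m|^2 = 4f'^2\sin^2\theta+4\frac{\sin^2f}{r^2}\bigl(2-\sin^2\theta\sin^2f\bigr)$. This must be compared with the claimed $4\frac{\sin^2 f}{r^2}(1+\cos^2\theta)=4\frac{\sin^2 f}{r^2}(2-\sin^2\theta)$. The two agree only when $\sin^2f$ in the bracket is replaced by $1$. This mismatch is the main obstacle.

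I would settle it by differentiating \eqref{eq:mAnsatz} componentwise. That formula is what the statement actually concerns, and the direct computation is the safe route. The third component is $1-2\sin^2\theta\sin^2 f$, and its $\varphi$-derivative vanishes. The first two components have $\varphi$-derivatives $2(-\sin\theta\sin\varphi\cos\theta\sin^2 f+\sin\theta\cos\varphi\sin f\cos f)$ and the analogous second one. So $|\partial_\varphi\m|^2/(r\sin\theta)^2=4\frac{\sin^2f}{r^2}(\cos^2\theta\sin^2 f+\cos^2 f)$. Together with $|\partial_\theta\m|^2/r^2=4\sin^2 f/r^2$, this gives the bracket $4\frac{\sin^2f}{r^2}(1+\cos^2f+\cos^2\theta\sin^2f)$. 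Whichever form the careful computation confirms is the one I would carry forward into the integral.

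For the integral I integrate over angles using $\int_{\S^2}\sin^2\theta=\frac{8\pi}{3}$ and $\int_{\S^2}(1+\cos^2\theta)=\frac{16\pi}{3}$. Then I insert $f=2\arctan h$ with $h=e^{-r}/r$, using $\sin f=2h/(1+h^2)$ and $f'=2h'/(1+h^2)$. Here $h'=-e^{-r}(r+1)/r^2$. Multiplying numerator and denominator by $r^4e^{4r}$ and collecting terms should give the rational-exponential form $\mathcal I_1$. As a consistency check, $(r+1)^2+2=r^2+2r+3$ matches the stated numerator, which supports the formula as stated: $f'^2$ contributes $(r+1)^2$ with weight $\frac{32\pi}{3}\cdot\frac{4}{\cdots}$, and the $\sin^2f$ term contributes $2$.
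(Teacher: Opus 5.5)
Your integration step is right, but the pointwise identity is never established, and as written your computation contradicts it. The false step is the claim that the vertical component of $r^{-1}\partial_\theta Z$ vanishes. In fact
\[
\langle Z,\partial_\theta Z\rangle_{\C}
=\overline{Z_1}\,\partial_\theta Z_1+\overline{Z_2}\,\partial_\theta Z_2
=\sin\theta\cos\theta\sin^2 f-(\cos\theta\sin f-i\cos f)\sin\theta\sin f
=i\sin\theta\sin f\cos f ,
\]
which is not zero in general. So the horizontal part of the $\theta$-contribution is $\frac{\sin^2 f}{r^2}(1-\sin^2\theta\cos^2 f)$, not $\frac{\sin^2 f}{r^2}$. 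Add your (correct) radial term $f'^2\sin^2\theta$ and your (correct) $\varphi$-term $\frac{\sin^2 f}{r^2}(1-\sin^2\theta\sin^2 f)$. This gives $|\nabla\m|^2=4f'^2\sin^2\theta+4\frac{\sin^2 f}{r^2}(2-\sin^2\theta)$, which is exactly the stated formula. The ``mismatch'' you call the main obstacle is an artifact of this error.

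Your componentwise cross-check is not independent, because it reuses the same wrong value $|\partial_\theta\m|^2=4\sin^2 f$. That is why it reproduces the same bracket, since $1+\cos^2 f+\cos^2\theta\sin^2 f=2-\sin^2\theta\sin^2 f$. Differentiating \eqref{eq:mAnsatz} correctly gives $\partial_\theta m_3=-4\sin\theta\cos\theta\sin^2 f$ and $|\partial_\theta\m|^2=4\sin^2 f(\sin^2 f+\cos^2\theta\cos^2 f)$. For example, at $\theta=\pi/2$, $f=\pi/4$ this equals $1$, not $2$. With this correction, the $\theta$-bracket plus your $\varphi$-bracket $\cos^2 f+\cos^2\theta\sin^2 f$ sums to $1+\cos^2\theta$. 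Your closing check that $(r+1)^2+2=r^2+2r+3$ reproduces $\mathcal I_1$ is circular. It only shows the two halves of the statement are mutually consistent, so it cannot decide between your bracket and the stated one.

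Once the $\theta$-term is fixed, either of your routes proves the identity; direct differentiation is what the paper does. Your integration then goes through as written. The angular integrals $\frac{8\pi}{3}$ and $\frac{16\pi}{3}$ give $\frac{32\pi}{3}\int_0^\infty(r^2f'^2+2\sin^2 f)\,\d r$. Substituting $f=2\arctan(e^{-r}/r)$ turns the integrand into $\mathcal I_1$.
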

\begin{proof}
Differentiating $\m$ in spherical coordinates yields the stated identities.
\end{proof}
As in the preceding subsection, we evaluate the integral on $[0,40]$
using arbitrary-precision ball arithmetic. For $r\ge 1$,
we have
\begin{align*}
    \mathcal I_1(r)
    \le
    \frac{128\pi}{3} 6 e^{-2r}.
\end{align*}
Consequently, for $R\ge 1$,
\begin{align*}
    \int_R^\infty\mathcal I_1(r)\,\d r
    \le
    \frac{64\pi}{3}
    6e^{-2R}.
\end{align*}
Together with the bounds established in the preceding subsection,
we obtain
\begin{align*}
    \theta_\ast
    :=
    \frac{\|\nabla\m_\ast\|_{L^2}^2}
    {\|\m_\ast-\e{3}\|_{L^2}\|\Delta\m_\ast\|_{L^2}}
    >
    0.5390.
\end{align*}
Combining these estimates with the exact scaling formula of Lemma~\ref{lemma:scaling}, we obtain
\begin{align*}
    I_1(H)
    <
    C(H)\left(2^{\frac34}+3^{\frac34}\right)
    \qquad \text{for all } H\ge 7.69.
\end{align*}

%--- bibliography ---
\bibliographystyle{plain}
\bibliography{bibliography.bib}

\end{document}